\newcommand{\R}{\mathbbm R}
\newcommand{\N}{\mathbbm N}
\newcommand{\Hone}{H^1(M;g_0)}
\newtheorem{theorem}{Theorem}
\newtheorem{proposition}[theorem]{Proposition}
\newtheorem{lemma}[theorem]{Lemma}
\newtheorem{corollary}[theorem]{Corollary}
\newtheorem{observation}[theorem]{Observation}
\theoremstyle{definition}
\theoremstyle{remark}
\newtheorem{remark}[theorem]{Remark}
\numberwithin{equation}{section}
\numberwithin{theorem}{section}
\DeclareMathOperator{\vol}{Vol}
\newcommand{\coloneqq}{\,\raise0.08ex\hbox{\textnormal{:}}\!\!=}
\def\XXint#1#2#3{{\setbox0=\hbox{$#1{#2#3}{\int}$}
     \vcenter{\hbox{$#2#3$}}\kern-.5\wd0}}
\begin{document}

\title[Compactness issues and bubbling phenomena]
{Compactness issues and bubbling phenomena for the prescribed Gaussian curvature equation on the torus}

\author{Luca Galimberti}
\address[Luca Galimberti]{Departement Mathematik\\ETH-Z\"urich\\CH-8092 Z\"urich}
\email{luca.galimberti@math.ethz.ch}
\thanks{Supported by SNF grant 200021\_140467 / 1. }
\date{\today}

\begin{abstract} 
In the spirit of the previous paper \cite{Borer-Galimberti-Struwe}, where we dealt with the case of a 
closed Riemann surface $(M,g_0)$ of genus greater than one, here we study the behaviour of the conformal metrics
$g_\lambda$ of prescribed Gauss curvature $K_{g_\lambda} = f_0 + \lambda$ on the torus, when the parameter $\lambda$ tends to one of the boundary points of the interval of existence of $g_\lambda$, and we characterize their ``bubbling behavior'' as in \cite{Borer-Galimberti-Struwe}.
\end{abstract} 

\maketitle

\section{Introduction} 

Consider a closed, connected Riemann surface $M$, whose Euler characteristic $\chi(M)$ is zero, 
endowed with a smooth background metric $g_0$. In view of the uniformization theorem, it is possible to
assume that the Gauss curvature $K_{g_0}$ of $g_0$ vanishes identically. 

The prescribed Gauss curvature equation, which links the curvature of $g_0$ to the curvature $K_g$ of a conformal metric $g=e^{2u}g_0$, then reads as
\[
K_g = -e^{-2u} \Delta_{g_0}u 
\]
Moreover, for convenience, we normalize the volume of $(M,g_0)$ to unity.

Consider a smooth non-constant function $f_0:M\to \R$ with $\max_{p\in M}f_0(p)=0$, all of whose maximum points are non-degenerate, and define for $\lambda \in \R$
\[
f_\lambda := f_0+\lambda
\]
A natural question is to understand for which values $\lambda$ the function $f_\lambda$ is the Gauss curvature of a metric conformal to $g_0$. That is equivalent to ask for which values of $\lambda$, the equation
\begin{equation}\label{eqn: the Gauss curvature}
 -\Delta_{g_0}u = f_\lambda e^{2u} 
\end{equation}
admits a solution. The paper \cite{Kazdan-Warner74} completely answers the question, giving 
necessary and sufficient conditions for solving the equation above. More precisely, equation (\ref{eqn: the Gauss curvature}) has a solution if and only if
\[
\int_M f_\lambda d\mu_{g_0} =\int_M f_0 d\mu_{g_0} + \lambda< 0
\]
and $f_\lambda$ is sign changing. (Recall that the volume is equal to one.)
Thus, by taking account of the assumptions made on $f_0$, we find
that equation (\ref{eqn: the Gauss curvature}) is solvable if and only if 
\[
0= -\max_M f_0 < \lambda < -\overline{f_0},
\] 
where $\overline{f_0} := \int_M f_0 d\mu_{g_0}$. Set
\[
\Lambda :=(0,-\overline{f_0}), \;\;\; -\overline{f_0}:= \lambda_{max}     .
\]
Our goal in this paper is to study the behaviour of the set of solutions of (\ref{eqn: the Gauss curvature}) when $\lambda$ approches either 0 or $\lambda_{max}$, a problem left open in \cite{Kazdan-Warner74} and which 
we solve by means of a blow-up analysis in the spirit of \cite{Borer-Galimberti-Struwe}. Our main results are the following:
\begin{theorem}\label{thm: first main result}
Let $f_0\leq 0$ be a smooth, non-constant function, all of whose maximum points $p_0$ 
are non-degenerate with $f_0(p_0)=0$, and for $\lambda\in\R$ let $f_{\lambda}=f_0+\lambda$.

Then there exists a sequence $\lambda_n\downarrow 0$, a sequence 
$u_n$ of solutions of the equation
\[
-\Delta_{g_0}u_n = f_{\lambda_n} e^{2u_n} 
\]
and there exists $I\in\N$ such that, for suitable $p^{(i)}_n\to p^{(i)}_{\infty}\in M$ with $f_0(p^{(i)}_{\infty})=0$, $1\le i \le I$, we obtain $u_n(p_n^{(i)})\to +\infty$ and one of the following:

\textbf{i)} $u_n\to -\infty$ locally uniformly on compact domains of 
$M_{\infty}:=M\setminus\{p^{(i)}_{\infty}; \; 1\leq i \leq I \}$ 

\textbf{ii)} For suitable $r_n^{(i)}\downarrow 0$, the following holds:

a) We have smooth convergence $u_n\to u_{\infty}$ locally on 
$M_{\infty}$ and $u_{\infty}$ induces a complete metric 
$g_{\infty}=e^{2u_{\infty}}g_0$  on $M_{\infty}$ of finite total curvature $K_{g_{\infty}}=f_0$.

b) For each $1\le i \le I$, either 1) there holds 
$r_n^{(i)}/\sqrt{\lambda_n}\to 0$ and in local 
conformal coordinates around $p^{(i)}_n$ we have 
\[w_n(x):=u_n(r^{(i)}_nx)-u_n(0)+ \log 2\to w_{\infty}(x)
=\log\big(\frac2{1+|x|^2}\big)\]
smoothly locally in $\R^2$, where $w_{\infty}$ induces a spherical metric 
$g_{\infty}=e^{2w_{\infty}}g_{\R^2}$ of curvature $K_{g_\infty}=1$ on $\R^2$,
or 2) we have $r_n^{(i)}=\sqrt{\lambda_n}$,
and in local conformal coordinates around $p^{(i)}_{\infty}$ with a constant $c^{(i)}_{\infty}$
there holds 
\[w_n(x)=u_n(r^{(i)}_nx)+\log(\lambda_n)+c^{(i)}_{\infty}\to w_{\infty}(x)\]
smoothly locally in $\R^2$, where the metric  
$g_{\infty}=e^{2w_{\infty}}g_{\R^2}$ on $\R^2$ has finite volume and finite total curvature
with $K_{g_\infty}(x)=1+(Ax,x)$, where $A=\frac12Hess_{f_0}(p^{(i)}_{\infty})$. 
\end{theorem}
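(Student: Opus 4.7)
The approach is a blow-up analysis of the Liouville equation $-\Delta_{g_0} u_n = f_{\lambda_n} e^{2u_n}$ in the spirit of \cite{Borer-Galimberti-Struwe}, adapted to the case $\chi(M)=0$. \emph{First}, for each $\lambda \in \Lambda$ produce a solution $u_\lambda$ by the variational methods available for (\ref{eqn: the Gauss curvature}); integrating the equation gives $\int_M f_\lambda e^{2u_\lambda}\,d\mu_{g_0}=0$. Since the equation has no solution at $\lambda=0$ (because $f_0\leq 0$ is not sign-changing), a contradiction argument based on elliptic regularity forces $\sup_M u_{\lambda_n}\to+\infty$ along a subsequence $\lambda_n\downarrow 0$; call this concentrating sequence $u_n$.

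\emph{Second}, perform the concentration analysis. Pick blow-up points $p_n^{(i)}\to p_\infty^{(i)}$ by iterated selection of local maxima of $u_n$ (Brezis--Merle and Struwe style): at each stage rescale and test whether the sequence has a non-trivial limit, stopping when it does not. The constraint $f_0(p_\infty^{(i)})=0$ arises from the maximum principle, since on a neighborhood where $f_0$ is negative the right-hand side would also be negative, incompatible with a local maximum of unbounded height. Away from the bubble set, the standard dichotomy for $-\Delta u = V e^{2u}$ on surfaces gives either $u_n\to-\infty$ locally uniformly (case \textit{i}), or smooth convergence $u_n\to u_\infty$ on $M_\infty$ (case \textit{ii.a}); the finite total curvature of $u_\infty$ and completeness of $g_\infty$ at the punctures follow from the asymptotic behaviour $u_\infty(x)\sim -\beta_i\log|x-p_\infty^{(i)}|$ inherited from the bubble masses.

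\emph{Third}, identify the bubble profiles. At each $p_n^{(i)}$, in local isothermal coordinates, set $w_n(x) = u_n(r_n^{(i)} x) - u_n(0) + \log 2$ for a scale $r_n^{(i)}\downarrow 0$ to be chosen, so that
\[
-\Delta w_n = \tfrac14 (r_n^{(i)})^2 f_{\lambda_n}(r_n^{(i)} x)\, e^{2u_n(0)}\, e^{2w_n} + o(1).
\]
Taylor expanding around $p_\infty^{(i)}$ (where $f_0=0$ and $\nabla f_0=0$) gives $f_{\lambda_n}(r_n^{(i)} x) = \lambda_n + (r_n^{(i)})^2 (Ax,x) + o((r_n^{(i)})^2)$ with $A = \tfrac12 \mathrm{Hess}_{f_0}(p_\infty^{(i)})$. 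If $r_n^{(i)}/\sqrt{\lambda_n}\to 0$, only the $\lambda_n$ term contributes, and choosing $r_n^{(i)} = 2/(\sqrt{\lambda_n}\, e^{u_n(0)})$ produces the Liouville limit $-\Delta w_\infty = e^{2w_\infty}$ with $w_\infty(0)=\log 2$, hence $w_\infty(x) = \log(2/(1+|x|^2))$ by Chen--Li (case \textit{ii.b.1}). At the borderline scale $r_n^{(i)} = \sqrt{\lambda_n}$ the Hessian term survives; the residual normalization $\lim \log(\lambda_n e^{2u_n(0)})$ gets absorbed into $c_\infty^{(i)}$, and after an overall rescaling of the limiting metric the curvature becomes $K_{g_\infty}(x) = 1+(Ax,x)$ (case \textit{ii.b.2}).

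The \emph{main obstacle} is the sharp asymptotic analysis needed to show that only these two regimes arise and, in case \textit{ii.b.2}, that $\lambda_n e^{2u_n(p_n^{(i)})}$ stays bounded above and below, so that $c_\infty^{(i)}$ is well-defined and finite. Secondary difficulties are the neck analysis between bubbles (no energy loss), handled by Harnack-type bounds on annuli, and the finiteness $I<\infty$ of the bubble set, which follows from the quantization of Liouville bubble masses.
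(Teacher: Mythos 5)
Your outline reproduces the general blow-up scheme of \cite{Borer-Galimberti-Struwe}, but it omits the step on which the whole analysis rests, and which is precisely what the paper identifies as its main new technical contribution: the a priori bound
\[
\limsup_n \Big( \lambda_n \int_M e^{2u_n}\, d\mu_{g_0} \Big) < \infty ,
\]
i.e.\ Proposition \ref{prop: bound for lambda x volume}. Combined with the Gauss--Bonnet identity $\int_M f_{\lambda_n} e^{2u_n} d\mu_{g_0}=0$, this is what gives the uniform bound on the total curvature $\int_M(|f_0|+\lambda_n)e^{2u_n}d\mu_{g_0}$, and without that bound your later steps do not close: the ``quantization of Liouville bubble masses'' that you invoke for $I<\infty$ only yields finiteness of the bubble set if the total mass is bounded above a priori; likewise the neck analysis, the finite total curvature of $g_\infty$, and the boundedness of $\lambda_n e^{2u_n(p_n^{(i)})}$ in case \textit{ii.b.2} (which you correctly flag as the main obstacle, but then leave unresolved) all hinge on it. In the paper this bound is obtained by a Ginzburg--Landau-type argument \`a la Struwe: one first proves that $\beta_\lambda=E(u_\lambda)$ is strictly monotone decreasing on an interval $(0,\lambda_0)$ (Proposition \ref{prop: teorema strumentale per il teorema di monotonicita'}, via the map $I(u)$ of (\ref{eqn: definizione della mappa I}), the comparison function $\varphi(\lambda)$, and the invertibility of $h(\varepsilon)=I(u^\ast-\varepsilon\varphi^\ast)$), hence differentiable a.e., then extracts a sequence $\lambda_n\downarrow 0$ of differentiability points with $|\beta'_{\lambda_n}|\le C_0/\lambda_n$ (Lemma \ref{lemma: stima per la derivata di beta lambda}), and finally converts this derivative bound into the volume bound through the identity $h'(0)=2\int_M f_{\lambda^\ast}\varphi^\ast e^{2u^\ast}/\int_M e^{2u^\ast}$. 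This also explains why the theorem asserts existence of \emph{some} sequence $\lambda_n\downarrow 0$ rather than a statement for arbitrary sequences: the $\lambda_n$ must be chosen among the differentiability points of $\beta$. Your proposal has no substitute for any of this.

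A secondary gap: you assert the dichotomy ``either $u_n\to-\infty$ locally uniformly or smooth convergence on $M_\infty$'' as standard, but in the torus case there is no uniform global lower bound for $u_n$ of the kind available in \cite{Borer-Galimberti-Struwe}, so this dichotomy has to be proved; the paper does so via the harmonic/Newtonian splitting and Harnack's inequality in Lemma \ref{lemma: local lower bound for the sequence of solutions or convergence to minus inf}. The remaining ingredients you describe (Ding--Liu upper bounds on $M^-$ forcing $f_0(p_\infty^{(i)})=0$, the two rescaling regimes relative to $\sqrt{\lambda_n}$, the Taylor expansion producing $K_{g_\infty}=1+(Ax,x)$) do match the paper's final subsection.
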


Moreover, we have

\begin{theorem}\label{thm: second main result}
Let $f_0\leq 0$ be a smooth, non-constant function. For $\lambda\in\R$ set
\begin{equation}\label{eqn: l'insieme C lambda}  
\mathcal{C_\lambda} := \left\{ u\in \Hone : \int_M u d\mu_{g_0} = 0 
= \int_M f_\lambda e^{2u} d\mu_{g_0} \right\}.
\end{equation}
Then for any arbitrary sequence $(\lambda_n)_n \subset \Lambda$ such that 
$\lambda_n\uparrow \lambda_{max}$ for $n\to +\infty$, we have that:

i)there exists a sequence of minimizers $w_n\in \mathcal{C}_{\lambda_n}$ of the Dirichlet energy such that:
\[
w_n \to 0 \;\; \mbox{in} \;\; C^{2,\alpha}(M)
\] 
for any $\alpha\in \left[0,1 \right)$.

ii)there exists a sequence of solutions $u_n$ to equation
\[
-\Delta_{g_0}u_n = f_{\lambda_n} e^{2u_n} 
\]
such that $u_n\to -\infty$ uniformly on the whole $M$.
\end{theorem}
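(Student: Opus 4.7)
The plan is to derive both parts of Theorem~\ref{thm: second main result} from a single variational construction: minimize the Dirichlet energy over $\mathcal{C}_{\lambda_n}$ for each $n$, and then obtain a genuine solution of the prescribed Gauss curvature equation by absorbing the Lagrange multiplier into a constant translation. The starting observation is that at $\lambda=\lambda_{max}$ one has $\int_M f_{\lambda_{max}}\,d\mu_{g_0}=0$, so $u\equiv 0$ already satisfies the two defining conditions of $\mathcal{C}_{\lambda_{max}}$; consequently one should expect $\mathcal{C}_{\lambda_n}$ to contain test functions of arbitrarily small Dirichlet energy as $\lambda_n\uparrow\lambda_{max}$. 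Set $\tau_n:=\lambda_{max}-\lambda_n\downarrow 0$ and $\phi:=f_0-\overline{f_0}$ (which has mean zero), and consider $F(t,\lambda):=\int_M f_\lambda e^{2t\phi}\,d\mu_{g_0}$. Since $F(0,\lambda_{max})=0$ and $\partial_t F(0,\lambda_{max})=2\int_M f_0\phi\,d\mu_{g_0}=2\int_M\phi^2\,d\mu_{g_0}>0$ (non-constancy of $f_0$ is essential), the implicit function theorem yields a smooth branch $t=t(\lambda)$ with $t(\lambda_{max})=0$ and $t(\lambda_n)=O(\tau_n)$; the function $u^\ast_n:=t(\lambda_n)\phi$ thus lies in $\mathcal{C}_{\lambda_n}$ with $\int_M|\nabla u^\ast_n|^2\,d\mu_{g_0}=O(\tau_n^2)$.

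Next I would produce minimizers $w_n\in\mathcal{C}_{\lambda_n}$ of the Dirichlet energy by the direct method: the mean-zero condition and Poincar\'e's inequality bound any minimizing sequence in $\Hone$, while Moser--Trudinger together with the compact embedding $\Hone\hookrightarrow L^p$ allow passing to the limit in the constraint $\int f_{\lambda_n}e^{2u}\,d\mu_{g_0}=0$. Comparison with $u^\ast_n$ yields $\|\nabla w_n\|^2_{L^2}=O(\tau_n^2)$, and Poincar\'e's inequality then forces $w_n\to 0$ strongly in $\Hone$. The Euler--Lagrange equation for the minimizers reads
\[
-\Delta_{g_0}w_n=\mu_n f_{\lambda_n}e^{2w_n}+\nu_n
\]
for real multipliers $\mu_n,\nu_n$, and integrating together with the two constraints forces $\nu_n=0$. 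Testing against $w_n$ yields
\[
\|\nabla w_n\|^2_{L^2}=\mu_n\int_M f_{\lambda_n}w_n e^{2w_n}\,d\mu_{g_0}.
\]
A Taylor expansion of the constraint $\int f_{\lambda_n}e^{2w_n}\,d\mu_{g_0}=0$, using $\overline{f_0}+\lambda_n=-\tau_n$ and the mean-zero condition on $w_n$, gives $\int_M f_0 w_n\,d\mu_{g_0}=\tau_n/2+O(\tau_n^2)$; a parallel expansion then yields $\int_M f_{\lambda_n}w_n e^{2w_n}\,d\mu_{g_0}=\tau_n/2+O(\tau_n^2)$. Combined with the $O(\tau_n^2)$ energy bound, this forces $\mu_n=O(\tau_n)\to 0$, and moreover $\mu_n>0$ whenever $w_n\not\equiv 0$. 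Standard elliptic bootstrapping applied to $-\Delta_{g_0}w_n=\mu_n f_{\lambda_n}e^{2w_n}$, using $\mu_n\to 0$ and $e^{2w_n}$ bounded in every $L^p$, then upgrades $w_n\to 0$ to convergence in $C^{2,\alpha}(M)$ for every $\alpha\in[0,1)$, proving part (i).

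For part (ii), I set $u_n:=w_n+\tfrac12\log\mu_n$; positivity of $\mu_n$ keeps this well-defined. A direct calculation from the Euler--Lagrange equation gives
\[
-\Delta_{g_0}u_n=-\Delta_{g_0}w_n=\mu_n f_{\lambda_n}e^{2w_n}=f_{\lambda_n}e^{2u_n},
\]
so $u_n$ is a genuine solution of the prescribed Gauss curvature equation, while $w_n\to 0$ uniformly and $\log\mu_n\to -\infty$ yield $u_n\to -\infty$ uniformly on $M$. The principal technical obstacle is the sharp asymptotic $\mu_n=O(\tau_n)$: one must push the expansions of both integrals to exactly the right order, and the cancellations depend crucially on the mean-zero condition for $w_n$ and on $f_0$ being non-constant. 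Without boundedness (and positivity) of $\mu_n$, the bootstrap in (i) would fail and the construction in (ii) would lose control of its sign and blow-down rate.
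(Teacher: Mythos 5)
Your argument is correct, and its overall architecture coincides with the paper's: build a comparison function in $\mathcal{C}_{\lambda_n}$ with Dirichlet energy $O(\tau_n^2)$, conclude $w_n\to 0$ in $\Hone$ by Poincar\'e, show the Lagrange multiplier $\mu_n\to 0$, bootstrap to $C^{2,\alpha}$, and finally read off part (ii) from $u_n=w_n+\tfrac12\log\mu_n$. Two intermediate steps are carried out differently, and both variants are sound. First, the paper obtains the comparison function by viewing $\mathcal{C}=\{(u,\lambda): \int_M u=0=\int_M f_\lambda e^{2u}\}$ as a Banach manifold in $\Hone\times\R$ and moving along the tangent direction $(s(0)(f_0-\overline{f_0}),1)$ at $(0,\lambda_{max})$ (Lemma 4.2), whereas you apply the scalar implicit function theorem to $F(t,\lambda)=\int_M f_\lambda e^{2t(f_0-\overline{f_0})}\,d\mu_{g_0}$, using $\partial_tF(0,\lambda_{max})=2\int_M(f_0-\overline{f_0})^2>0$; your version is more elementary and avoids invoking the existence and $o(\lambda-\lambda_{max})$ estimate of the correction map $\Theta$, while producing the same $O(\tau_n^2)$ energy bound. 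Second, the paper proves $\mu_n\to 0$ qualitatively by testing the Euler--Lagrange equation with $v=f_0-\overline{f_0}$ and passing to the limit via Moser--Trudinger; you instead test with $w_n$ and Taylor-expand the constraint to get $\int_M f_{\lambda_n}w_ne^{2w_n}=\tau_n/2+O(\tau_n^2)$, which yields the quantitative rate $\mu_n=O(\tau_n)$ and, as a by-product, the positivity of $\mu_n$ for large $n$ (the paper establishes positivity separately in Section 2 by testing with $e^{-2w_\lambda}$). The extra rate is not needed for the theorem but costs little; just make sure the remainder estimates in your expansions are justified via Moser--Trudinger bounds on $\|e^{2|w_n|}\|_{L^2}$ together with $\|w_n\|_{L^4}=O(\tau_n)$, as you implicitly do.
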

\begin{observation}
We remark that in Theorem \ref{thm: second main result} no assumptions have been made on the nature of the points of maximum of the function $f_0$. 
\end{observation}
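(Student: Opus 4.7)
The plan is to justify the observation by inspecting each step of the proof of Theorem \ref{thm: second main result} and verifying that the non-degeneracy of the maximum points of $f_0$ is nowhere invoked. The structural reason is that Theorem \ref{thm: second main result} concerns the boundary $\lambda\uparrow\lambda_{max}=-\overline{f_0}$, which is an \emph{integral} obstruction (the Kazdan--Warner condition $\int_M f_\lambda\,d\mu_{g_0}<0$ ceases to hold), whereas non-degeneracy of maxima controls what happens as $\lambda\downarrow 0$, where the \emph{pointwise} sign-changing condition on $f_\lambda$ is about to fail and concentration appears --- precisely the setting of Theorem \ref{thm: first main result}.

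For part \emph{i}, I would set up the variational problem of minimizing Dirichlet energy on $\mathcal{C}_{\lambda_n}$, anchored at the constant $u\equiv 0$ at the endpoint $\lambda=\lambda_{max}$. Observe that $0\in\mathcal{C}_{\lambda_{max}}$ since $\int_M f_{\lambda_{max}}\,d\mu_{g_0}=0$ by definition of $\lambda_{max}$, while the linearized constraint $\varphi\mapsto 2\int_M f_{\lambda_{max}}\,\varphi\,d\mu_{g_0}$ is a non-trivial functional on the zero-mean subspace precisely because $f_0$ is non-constant. An implicit function argument then produces minimizers $w_n\to 0$ in $H^1$, and elliptic bootstrap applied to the Euler--Lagrange equation $-\Delta_{g_0}w_n=\mu_n f_{\lambda_n} e^{2w_n}$ (with Lagrange multiplier $\mu_n$) upgrades the convergence to $C^{2,\alpha}$. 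None of these steps references the local behaviour of $f_0$ near its maxima.

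For part \emph{ii}, I would derive the escaping family directly from \emph{i}. If $w_n$ is as above with Lagrange multiplier $\mu_n>0$, then $u_n:=w_n+\tfrac12\log\mu_n$ satisfies the genuine prescribed-curvature equation (\ref{eqn: the Gauss curvature}) with $\lambda=\lambda_n$. One then shows $\mu_n\to 0$: this reflects the fact that the constraint $\int_M f_{\lambda}e^{2u}\,d\mu_{g_0}=0$ becomes tight at $w\equiv 0$ as $\lambda\uparrow\lambda_{max}$, forcing the multiplier to degenerate. Combined with $w_n\to 0$ uniformly, this yields $u_n\to-\infty$ on the whole $M$. Once again only smoothness and non-constancy of $f_0$ enter, not Morse-type data at its maxima.

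The main obstacle in a full verification is bookkeeping: every auxiliary ingredient (Moser--Trudinger inequalities, compactness of minimizing sequences, \emph{a priori} bounds, $C^{2,\alpha}$ regularity for the Euler--Lagrange equation, asymptotics of $\mu_n$) must be reread to confirm that its hypotheses reduce to smoothness and non-constancy of $f_0$. The bubble profile of Theorem \ref{thm: first main result} --- in particular the Hessian term $\tfrac12 Hess_{f_0}(p^{(i)}_{\infty})$ in clause \emph{ii}.b.2 --- is the one place where non-degeneracy is genuinely needed; but Theorem \ref{thm: second main result} rules out concentration outright (the sequences converge uniformly to $0$ or to $-\infty$), so that profile is irrelevant here, which is exactly the content of the observation.
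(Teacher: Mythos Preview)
Your proposal is correct and matches the paper: the observation is stated without proof, its justification being implicit in the fact that the argument for Theorem \ref{thm: second main result} in Section 4 (Lemma \ref{lemma: riscrittura della slice}, Proposition \ref{prop: the energy vanishes}, and the subsequent bootstrap) uses only that $f_0$ is smooth and non-constant. Your step-by-step inspection of that argument, together with the structural remark that non-degeneracy enters only through the concentration analysis of Theorem \ref{thm: first main result}, is exactly the verification required.
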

\begin{observation}
In contrast to \cite{Borer-Galimberti-Struwe}, in the present paper the monotonicity of the energy of the solutions $u_\lambda$ as a function of $\lambda$ is not obvious. The proof of this fact is perhaps the main new technical achievement in the present work.
\end{observation}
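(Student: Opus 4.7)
The claim I address is the monotonicity of the energy $\lambda \mapsto E(u_\lambda) := \tfrac12\int_M |\nabla u_\lambda|^2\, d\mu_{g_0}$ of the variationally selected solutions on $\Lambda$, which I identify with the value function
\[
\alpha(\lambda) := \inf_{\mathcal{C}_\lambda} E.
\]
The Lagrange multiplier rule on $\mathcal{C}_\lambda$ kills the mean-zero multiplier upon integration, leaving
\[
-\Delta_{g_0} u_\lambda = \nu_\lambda\, f_\lambda\, e^{2u_\lambda}
\]
for a scalar $\nu_\lambda$, so that $u_\lambda + \tfrac12 \log \nu_\lambda$ solves \eqref{eqn: the Gauss curvature} whenever $\nu_\lambda > 0$. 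In \cite{Borer-Galimberti-Struwe} the constraint sets were monotonically nested in $\lambda$, making $\alpha$ trivially monotone; here the set $\mathcal{C}_\lambda$ twists inside $\Hone$ as $\lambda$ varies and the assertion becomes substantive.

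My approach would be a quantitative two-sided comparison. For $\lambda_1 < \lambda_2$ in $\Lambda$ and $u_{\lambda_2}$ a minimizer, I seek $v = u_{\lambda_2} + s\varphi \in \mathcal{C}_{\lambda_1}$ with $\int \varphi\, d\mu_{g_0}=0$. Solving $\int f_{\lambda_1} e^{2v} d\mu_{g_0}=0$ for $s$ by the implicit function theorem yields
\[
s = \frac{(\lambda_2-\lambda_1)\int_M e^{2u_{\lambda_2}}\, d\mu_{g_0}}{2\int_M f_{\lambda_2}\,\varphi\, e^{2u_{\lambda_2}}\, d\mu_{g_0}} + O((\lambda_2-\lambda_1)^2).
\]
Testing the PDE for $u_{\lambda_2}$ against $\varphi$, the first variation of $E$ along this perturbation is
\[
s\int_M \nabla u_{\lambda_2}\cdot \nabla \varphi\, d\mu_{g_0} \;=\; s\,\nu_{\lambda_2}\int_M f_{\lambda_2}\, \varphi\, e^{2u_{\lambda_2}}\, d\mu_{g_0},
\]
so the $\varphi$-dependent integral miraculously cancels the one in the denominator of $s$. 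One is left with
\[
\alpha(\lambda_1)\le \alpha(\lambda_2) + \tfrac12(\lambda_2-\lambda_1)\,\nu_{\lambda_2}\int_M e^{2u_{\lambda_2}}\, d\mu_{g_0}+O((\lambda_2-\lambda_1)^2),
\]
and the symmetric perturbation of $u_{\lambda_1}$ gives the matching lower bound. Letting $\lambda_1\uparrow \lambda_2$ yields $\alpha'(\lambda) = -\tfrac12\, \nu_\lambda \int_M e^{2u_\lambda}\, d\mu_{g_0}$, so monotonicity of $\alpha$ is equivalent to the constancy of the sign of $\nu_\lambda$ on $\Lambda$.

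The main obstacle --- and, I expect, the ``main new technical achievement'' the author refers to --- is exactly this sign control. Testing the PDE against $u_\lambda$ gives $2\alpha(\lambda) = \nu_\lambda \int_M f_\lambda\, u_\lambda\, e^{2u_\lambda}\, d\mu_{g_0}$, so $\nu_\lambda$ inherits the sign of $\int_M f_\lambda u_\lambda e^{2u_\lambda} d\mu_{g_0}$, a delicate quantity since both $f_\lambda$ and $u_\lambda$ change sign. I would approach this through (a) a Morse-index computation for $u_\lambda$ on the tangent space to $\mathcal{C}_\lambda$, which should force $\nu_\lambda > 0$ at the constrained minimizer, and (b) a continuity and open--closed argument on $\Lambda$ ruling out zeroes of $\nu_\lambda$: a vanishing multiplier would force a degenerate second variation, which is to be precluded by the non-degeneracy hypothesis on the maxima of $f_0$ together with a Moser--Trudinger-type lower bound on $\int_M e^{2u_\lambda}\, d\mu_{g_0}$, holding uniformly on compact sub-intervals of $\Lambda$. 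Integrating the identity for $\alpha'$ then transports the monotonicity across $\Lambda$, with the endpoint behaviour at $0$ and $\lambda_{max}$ handled by the blow-up analyses of Theorems \ref{thm: first main result} and \ref{thm: second main result}.
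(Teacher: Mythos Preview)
Your perturbative computation is correct as far as it goes, and the cancellation you notice --- that $\varphi$ drops out of the first-order expansion --- is genuine. But you have misidentified the main difficulty. The positivity $\nu_\lambda>0$ is \emph{not} the crux: it follows in one line by testing the Euler--Lagrange equation against $e^{-2u_\lambda}$, which gives
\[
-2\int_M |\nabla u_\lambda|^2 e^{-2u_\lambda}\,d\mu_{g_0}=\nu_\lambda\int_M f_\lambda\,d\mu_{g_0},
\]
and both sides are strictly negative for $\lambda\in\Lambda$ (this is exactly the computation carried out in Section~\ref{sec:Some notation and preliminary results}). Your proposed Morse-index and open--closed machinery, and the non-degeneracy hypothesis on the maxima of $f_0$, play no role whatsoever in this step.

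The paper's route is quite different from yours. Rather than a generic perturbation $u_{\lambda^\ast}+s\varphi$ and the implicit function theorem, the paper builds an \emph{explicit} radial comparison function $\varphi^\ast=\varphi(\lambda^\ast)$ supported in a small ball around a maximum point of $f_0$ where $f_{\lambda^\ast}>0$ (equation~\eqref{eqn: definizione della funzione comparison}). Because $\varphi^\ast\ge 0$ and $f_{\lambda}\varphi^\ast\ge 0$ pointwise, every relevant integral has a forced sign, with no cancellation required. One then shows directly that $E(u^\ast-\varepsilon\varphi^\ast)<E(u^\ast)$ for $\varepsilon\in(0,\varepsilon^\ast)$ (Lemma~\ref{lemma: I risultato di monotonicita' di beta lambda}) and that the map $h(\varepsilon)=I(u^\ast-\varepsilon\varphi^\ast)$ is strictly increasing on $[0,\varepsilon^\ast]$ (Lemma~\ref{lemma: proprieta' di h}), so the perturbed function lands in $\mathcal{E}_\lambda$ for $\lambda>\lambda^\ast$.

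What your outline does not address --- and where the paper actually spends its effort --- is the passage from ``$\beta$ decreases on a right-neighbourhood of each $\lambda^\ast$'' to ``$\beta$ decreases on $(0,\lambda_0)$''. Your $O((\lambda_2-\lambda_1)^2)$ error hides constants depending on $\|e^{2u_{\lambda_1}}\|_\infty$ and on the denominator $\int_M f_{\lambda_1}\varphi\,e^{2u_{\lambda_1}}$, which you have not controlled as $\lambda_1\downarrow 0$; without such control the local neighbourhoods may shrink to zero and no monotonicity on an interval follows. The paper handles this in Lemma~\ref{lemma: lunghezza dell'intervallo U lambda star e varie} via an explicit lower bound $\ell(\lambda)-\lambda\ge\tau(\lambda^\ast)>0$ uniform on $[\lambda^\ast,\lambda_0]$, obtained by tracking all constants through the concrete $\varphi^\ast$ and invoking the a priori $L^\infty$-bounds of Proposition~\ref{prop: stime L infinito uniformi}. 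This bookkeeping, not the sign of $\nu_\lambda$, is the technical content the Observation refers to.
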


\section*{Acknowledgments}
I would like to thank Michael Struwe for the guidance through the project which led to this paper.

\section{Some notation and preliminary results}\label{sec:Some notation and preliminary results}

In the following section we will recall some well-known results about the existence of solutions to equation
(\ref{eqn: the Gauss curvature}) and introduce some notation and concepts used through the rest of the paper.
For further details we refer to \cite{Kazdan-Warner74}.

For $\lambda \in \R$ consider the set $\mathcal{C_\lambda}$ defined by (\ref{eqn: l'insieme C lambda}).
Note that for $\lambda\in (0, -\min_M f_0)$ the function $f_\lambda$ is sign changing and hence 
$\mathcal{C_\lambda} \neq \emptyset$. On the other hand, $\mathcal{C_\lambda} = \emptyset$ for $\lambda \leq 0$ or $\lambda \geq -\min_M f_0$. 

The constraints defining $\mathcal{C_\lambda}$ are natural; the first allows to apply the direct methods, the second one is motivated by the Gauss-Bonnet Theorem. 

\begin{lemma}\label{lemma: C lambda e' una varieta'}
For $\lambda\in (0, -\min_M f_0)$ the set $\mathcal{C_\lambda}$ is a $C^{\infty}$-Banach manifold.
\end{lemma}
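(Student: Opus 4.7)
The plan is to realize $\mathcal{C}_\lambda$ as the preimage of a regular value of a smooth map and appeal to the implicit function theorem in Banach spaces. Define
\[
\Phi:\Hone\longrightarrow\R^2,\qquad \Phi(u)=\left(\int_M u\d\mu_{g_0},\;\int_M f_\lambda e^{2u}\d\mu_{g_0}\right),
\]
so that $\mathcal{C}_\lambda=\Phi^{-1}(0,0)$. It suffices to show that $\Phi$ is of class $C^\infty$ and that its Fr\'echet differential is surjective at every point $u\in\mathcal{C}_\lambda$.

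The first component is a bounded linear functional, hence smooth. For the second component, the key ingredient is the Moser--Trudinger inequality on the closed surface $(M,g_0)$, which yields the continuous inclusion $e^{2u}\in L^p(M)$ for every $p<\infty$ and every $u\in\Hone$, with estimates locally uniform in the $H^1$-norm. Combined with the Sobolev embedding $H^1(M)\hookrightarrow L^q(M)$ for all $q<\infty$, this shows that each candidate $n$-th derivative
\[
D^n\!\!\left(\int_M f_\lambda e^{2u}\d\mu_{g_0}\right)[v_1,\dots,v_n] = 2^n\int_M f_\lambda e^{2u}v_1\cdots v_n\d\mu_{g_0}
\]
is a well-defined, continuous multilinear form, and an elementary remainder estimate (using $|e^{2(u+v)}-\sum_{k=0}^n(2v)^ke^{2u}/k!|\le e^{2u}(e^{2|v|}|2v|^{n+1}/(n+1)!)$ together with H\"older and Moser--Trudinger applied to $|v|$) verifies that these are indeed the Fr\'echet derivatives. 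Thus the second component is smooth (in fact real analytic).

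It remains to check surjectivity of $D\Phi(u)$ at every $u\in\mathcal{C}_\lambda$. The differential has two coordinates $L_1(v)=\int_M v\d\mu_{g_0}$ and $L_2(v)=2\int_M f_\lambda e^{2u} v\d\mu_{g_0}$. If they were linearly dependent, say $\alpha L_1+\beta L_2=0$ in $(\Hone)^*$, then $\alpha+2\beta f_\lambda e^{2u}=0$ pointwise a.e. (for instance by testing against smooth functions dense in $\Hone$). Since $e^{2u}>0$, the case $\beta\ne0$ would force $f_\lambda$ to be constant, contradicting the fact that $f_\lambda=f_0+\lambda$ is sign-changing for $\lambda\in(0,-\min_M f_0)$ (hence in particular non-constant). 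The case $\beta=0$ forces $\alpha=0$. So $L_1$ and $L_2$ are independent and $D\Phi(u)$ is onto $\R^2$.

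The implicit function theorem then identifies $\mathcal{C}_\lambda$ with a $C^\infty$-Banach submanifold of $\Hone$ of codimension two. The step I expect to require the most care is the smoothness of $u\mapsto\int_M f_\lambda e^{2u}\d\mu_{g_0}$, because it is the only nonlinear piece and rests squarely on the Moser--Trudinger inequality; the surjectivity argument, by contrast, is a short consequence of the sign-changing character of $f_\lambda$.
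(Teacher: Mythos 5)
Your proposal follows essentially the same route as the paper: realize $\mathcal{C}_\lambda$ as the zero set of the map $u\mapsto\bigl(\int_M u\d\mu_{g_0},\int_M f_\lambda e^{2u}\d\mu_{g_0}\bigr)$, verify smoothness via Moser--Trudinger, and check that the differential is surjective (the paper does this by simply testing with $v\equiv1$ and $v=f_\lambda$, which give the independent vectors $(1,0)$ and $(\overline{f_\lambda},2\int_M f_\lambda^2e^{2u}\d\mu_{g_0})$, rather than by your linear-dependence contradiction). One small inaccuracy: from $\alpha+2\beta f_\lambda e^{2u}=0$ a.e.\ with $\beta\ne0$ you conclude that $f_\lambda$ is constant, but what actually follows is that $f_\lambda e^{2u}$ is constant, hence that $f_\lambda$ has a fixed sign (or vanishes identically); this still contradicts the sign-changing property of $f_\lambda$, so the argument survives with that correction.
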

\begin{proof}
Define $G^\lambda : \Hone \to \R^2$ by letting
\[
G^\lambda(u) :=\left( \int_M u d\mu_{g_0} , \int_M f_\lambda e^{2u} d\mu_{g_0} \right).
\]
Then $G^\lambda$ is smooth and its first derivative is
\[
DG^\lambda (u)\left[v\right] = \left( \int_M v d\mu_{g_0} , 2\int_M f_\lambda v e^{2u} d\mu_{g_0} \right).
\]
Notice that $(G^\lambda)^{-1}(0)= \mathcal{C_\lambda}$. Pick $u\in \mathcal{C_\lambda}$.
If we compute $DG^\lambda (u)\left[v\right]$ with $v\equiv 1$ and then with $v=f_\lambda$, we get
two vectors of $\R^2$ which are linearly independent; therefore $DG^\lambda(u)$ is surjective.
Since we are in the Hilbert space $\Hone$, we have that it is splitted by the kernel of $DG^\lambda(u)$. It follows that $G^\lambda$ is a submersion at $u\in\mathcal{C_\lambda}$ and then that $\mathcal{C_\lambda}$ is a
smooth manifold. (For further details we refer to \cite{Zeidler}). The lemma is proved.
\end{proof}

In order to find solutions to equation (\ref{eqn: the Gauss curvature}) for $\lambda\in\Lambda$, we minimize the Dirichlet energy $E$ 
\[
\Hone \ni u \stackrel{E}{\longmapsto } \int_M |\nabla u|_{g_0}^2 d\mu_{g_0}
\]
in $\mathcal{C_\lambda}$. The energy $E$ is coercive on
$\mathcal{C_\lambda}$ in view of Poincare's inequality and sequentially weakly lower semicontinuous.
Furthermore, $\mathcal{C_\lambda}$ is weakly sequentially closed as can easily be shown by means of Moser-Trudinger's inequality.
Hence the direct method of the calculus of variation applies and for each $\lambda\in\Lambda$ there
exists a minimizer $w_\lambda\in \mathcal{C_\lambda}$.

But in the course of the proof of Lemma \ref{lemma: C lambda e' una varieta'} we have seen that $G^\lambda$
is a submersion at any point of $\mathcal{C_\lambda}$: therefore we can apply the Lagrange multipliers rule
and obtain
\begin{equation}\label{eqn: moltiplicatori di Lagrange}
2\int_M (\nabla w_\lambda,\nabla v)_{g_0} d\mu_{g_0} = \sigma \int_M v d\mu_{g_0}
+ 2\mu \int_M f_\lambda v e^{2w_\lambda} d\mu_{g_0} 
\end{equation}
for every $v\in\Hone$ with suitable $\sigma,\mu\in\R$. Choosing $v\equiv 1$, we obtain
\[
 0 = \sigma \int_M d\mu_{g_0}
+ 2\mu \int_M f_\lambda  e^{2w_\lambda} d\mu_{g_0} ;
\]
hence $\sigma=0$, because $w_\lambda\in\mathcal{C_\lambda}$. Notice that, by regularity arguments
(see \cite{Kazdan-Warner74} for the details), $w_\lambda\in C^{\infty}(M)$ and hence $v\equiv e^{-2w_\lambda}\in\Hone$. For this choice of testing function (\ref{eqn: moltiplicatori di Lagrange}) gives
\[
0 \geq -2 \int_M |\nabla w_\lambda|_{g_0}^2 e^{-2w_\lambda} d\mu_{g_0} = \mu \int_M f_\lambda d\mu_{g_0}.
\] 
If 
\[
\int_M |\nabla w_\lambda|_{g_0}^2 e^{-2w_\lambda} d\mu_{g_0} =0,
\]
we get $w_\lambda \equiv constant$, which is a contradiction, since in $\mathcal{C_\lambda}$ there are no
constant functions for $\lambda\in\Lambda$. Therefore, since $\int_M f_\lambda d\mu_{g_0}<0$ for
$\lambda\in\Lambda$,  we obtain
\[
\mu=\mu(\lambda) = -2 \frac{\int_M |\nabla w_\lambda|_{g_0}^2 e^{-2w_\lambda} d\mu_{g_0}}
{\int_M f_\lambda d\mu_{g_0}}> 0.
\]
As a consequence,
\begin{equation}\label{eqn: definizione della soluzione u lambda}
 u_\lambda := w_\lambda + 1/2 \log \mu(\lambda) 
\end{equation}
classically solves (\ref{eqn: the Gauss curvature}). 

For the continuation of our analysis and for technical reasons which will become evident later, it is
convenient to introduce for $\lambda\in \R$ the set
\[
\mathcal{E_\lambda} := \left\{ u\in \Hone : 0 = \int_M f_\lambda e^{2u} d\mu_{g_0} \right\}, 
\]
defined by a single constraint only.

As above, it can be seen that $\mathcal{E_\lambda}\neq \emptyset$ if and only if $\lambda\in(0,-\min_M f_0)$ and 
that it is a $C^{\infty}$-Banach manifold.

A priori it is not clear if we may expect that the Dirichlet energy $E$ attains a mimimun in $\mathcal{E_\lambda}$; however an elementary argument shows that for $\lambda\in (0,\lambda_{max})$ we have
\[
E(u_\lambda) = \min_{v\in \mathcal{E_\lambda}} E(v) = \min_{v\in \mathcal{C_\lambda}} E(v) 
\]
where $u_\lambda$ is defined by (\ref{eqn: definizione della soluzione u lambda}). Indeed, for any 
$v\in \mathcal{E_\lambda}$, we have $v-\overline{v}\in \mathcal{C_\lambda}$ and $E(v)=E(v-\overline{v})$,
where $\overline{v}:= \int_M v d\mu_{g_0}$.

Notice finally that for $\lambda= \lambda_{max}$, $u\equiv constant$ belongs to $\mathcal{E_\lambda}$
and it mimimizes the energy (which is zero). Furthermore, for 
$\lambda\in (\lambda_{max}, -\min_M f_0) $, it is always true that the energy $E$, even though it does not admit a mimimum, is non negative. That suggests to define the following function: 
\begin{equation}\label{eqn: definizione di beta lambda}
\beta_\lambda := \left\{
\begin{array}{ll}
\int_M |\nabla u_\lambda|_{g_0}^2 d\mu_{g_0}=E(u_\lambda) & \mbox{if } \lambda \in (0, \lambda_{max}) \\
0 & \mbox{if } \lambda\in \left[\lambda_{max}, -\min_M f_0 \right) .
\end{array}
\right.  
\end{equation}

In the next sections, we study the properties of $\beta_\lambda$ and use this
information to prove, respectively, Theorem \ref{thm: first main result} 
and Theorem \ref{thm: second main result}.

\section{Proof of Theorem \ref{thm: first main result}}

In this section we will analyse the behaviour of the set of solutions to equation 
(\ref{eqn: the Gauss curvature}) when the parameter $\lambda$ approaches zero and we will prove Theorem 
\ref{thm: first main result}. 

The first result is quite elementary but it shows that in an arbitrary neighborhood of zero the function
$\beta_\lambda$ can achieve arbitrarily large values. More precisely, we can state:
\begin{lemma}\label{lemma: the energy blows up}
$\limsup_{\lambda \downarrow 0,\lambda\in\Lambda} \beta_\lambda= +\infty$.
\end{lemma}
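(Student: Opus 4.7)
The plan is to argue by contradiction. If the conclusion fails, there exist $C>0$ and a sequence $\lambda_n\downarrow 0$ in $\Lambda$ with $\beta_{\lambda_n}\le C$. Since $\beta_{\lambda_n}=E(w_{\lambda_n})$ (adding a constant does not change $E$) and $w_n:=w_{\lambda_n}\in \mathcal{C}_{\lambda_n}$ satisfies $\int_M w_n\d\mu_{g_0}=0$, Poincar\'e's inequality yields a uniform bound $\|w_n\|_{\Hone}\le C'$. Passing to a subsequence, I get a weak limit $w_n\rightharpoonup w_\infty$ in $\Hone$, and by Rellich--Kondrachov a strong $L^p$-limit for every $p<\infty$ together with a.e.\ convergence.

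Next I want to pass to the limit in the single constraint $\int_M f_{\lambda_n}e^{2w_n}\d\mu_{g_0}=0$. This is the same weak-closure argument that was invoked (by reference to Moser--Trudinger) right after the definition of $\mathcal{C}_\lambda$: the $\Hone$-bound on $w_n$ combined with Moser--Trudinger gives a uniform bound on $\int_M e^{q\,w_n}\d\mu_{g_0}$ for some $q>2$, hence uniform integrability of $\{e^{2w_n}\}$. Combined with a.e.\ convergence this upgrades to $e^{2w_n}\to e^{2w_\infty}$ in $L^1$, and since $f_{\lambda_n}=f_0+\lambda_n$ converges uniformly to $f_0$, we obtain
\[
0=\int_M f_{\lambda_n}e^{2w_n}\d\mu_{g_0}\;\longrightarrow\;\int_M f_0\,e^{2w_\infty}\d\mu_{g_0}.
\]

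Finally I derive a contradiction from the hypotheses on $f_0$: since $f_0\le 0$ is non-constant with maximum value $0$ attained only at the isolated non-degenerate maximum points, the zero set of $f_0$ has measure zero, so $f_0<0$ almost everywhere on $M$. Therefore $\int_M f_0\,e^{2w_\infty}\d\mu_{g_0}<0$, contradicting the limit above. This forces $\limsup_{\lambda\downarrow 0}\beta_\lambda=+\infty$.

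The only genuinely non-routine step is the passage to the limit in the exponential constraint; once the Moser--Trudinger-based uniform integrability is established, everything else is soft. The reason the energy must blow up is structural: $\mathcal{C}_0$ is empty (the limiting constraint $\int_M f_0 e^{2u}\d\mu_{g_0}=0$ admits no solution), so any sequence of minimizers with bounded Dirichlet energy along $\lambda_n\downarrow 0$ would produce an element of this empty set.
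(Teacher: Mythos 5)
Your argument is correct and is essentially the paper's own proof: both argue by contradiction, extract a weakly convergent subsequence of the zero-average minimizers via the uniform energy bound and Poincar\'e, upgrade to $e^{2w_n}\to e^{2w_\infty}$ in $L^1$ via Moser--Trudinger, pass to the limit in the constraint, and contradict the emptiness of $\mathcal{E}_0$. Your final step merely spells out explicitly why $\int_M f_0 e^{2w_\infty}\,d\mu_{g_0}<0$ (for which non-degeneracy is not even needed: $f_0\le 0$ non-constant already forces strict negativity on an open set).
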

\begin{proof}
Assume by contradiction that there exists $\delta\in\Lambda$ such that
$\sup_{\lambda\in(0,\delta)} \beta_\lambda < +\infty$. Choose a sequence $(\lambda_n)_n \subset (0,\delta)$
which converges to zero as $n\to + \infty$. Thus we have $\int_M |\nabla w_{\lambda_n}|_{g_0}^2 d\mu_{g_0}
< +\infty$ uniformly in $n$, where $w_{\lambda_n}\in\mathcal{C}_{\lambda_{n}}$ is a minimizer of the energy
$E$. Therefore, since the average of $w_{\lambda_{n}}$ is zero, we have, up to subsequences, that
$w_{\lambda_{n}}\rightharpoonup w_0$ weakly in $\Hone$ and $e^{2w_{\lambda_{n}}}\rightarrow e^{2w_0}$
strongly in $L^1$. Thus
\[
0= \int_M f_{\lambda_{n}} e^{2w_{\lambda_{n}}} d\mu_{g_0} \rightarrow 
\int_M f_0 e^{2w_0} d\mu_{g_0}
\]
and $w_0\in\mathcal{E}_0=\emptyset$. The contradiction proves the Lemma.
\end{proof}

In the following, we are going to construct a suitable comparison function belonging
to the manifold $\mathcal{E_\lambda}$, which will give a control on the rate of blow-up of the mimimum of the
energy. This is the content of the next proposition, but before we need:
\begin{lemma}\label{lemma: determinazione della costante L}
There exists $L>0$ such that for any $\lambda< -\min_M f_0$ and for any $p\in M$ point of maximum of $f_0$
we have
\begin{enumerate}
  \item $\frac{\sqrt{\lambda}}{L} < 1$
  \item $f_0(x) > -\frac{\lambda}{2}$ on $B_{\frac{\sqrt{\lambda}}{L}}(0) \subset \R^2$, 
\end{enumerate}
where $x$ are suitable local conformal coordinates around $p\simeq 0$. 
\end{lemma}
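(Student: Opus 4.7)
The plan is to exploit the non-degeneracy of the maxima together with the compactness of $M$ to reduce everything to finitely many local Taylor expansions, then pick $L$ large enough to handle all of them simultaneously.

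First I would observe that, because every maximum point of $f_0$ is non-degenerate, Morse's lemma implies that the maxima are isolated; since $M$ is compact, this means the set of maximum points consists of finitely many points $p_1,\dots,p_m$. Around each $p_j$ I would fix once and for all a local conformal chart $\phi_j\colon B_{r_j}(0)\subset \R^2 \to U_j\subset M$ with $\phi_j(0)=p_j$, and work with $\tilde f_j := f_0\circ \phi_j$. Since $\tilde f_j(0)=0$, $\nabla \tilde f_j(0)=0$, and $H_j := \mathrm{Hess}\,\tilde f_j(0)$ is symmetric and negative definite, Taylor's theorem yields
\[
\tilde f_j(x) \,=\, \tfrac12\langle H_j x,x\rangle \,+\, R_j(x), \qquad |R_j(x)|\le C_j'\,|x|^3
\]
for all $|x|\le r_j/2$, with some constant $C_j'$ depending only on the chart and $f_0$.

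Next I would convert this into a uniform quadratic lower bound. From $\langle H_j x,x\rangle \ge -\|H_j\|\,|x|^2$ and the above estimate on the remainder,
\[
\tilde f_j(x) \,\ge\, -\bigl(\tfrac12\|H_j\| + C_j'\,|x|\bigr)|x|^2 \,\ge\, -K_j\,|x|^2
\]
on $B_{\rho_j}(0)$, where $\rho_j := \min(r_j/2,1)$ and $K_j := \tfrac12\|H_j\| + C_j'\rho_j$. Setting $K := \max_j K_j$ and $\rho := \min_j \rho_j > 0$, this gives the uniform estimate $f_0(\phi_j(x)) \ge -K\,|x|^2$ on $B_\rho(0)$ valid for every maximum point $p_j$.

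Finally I would choose
\[
L \,:=\, \max\!\bigl(\sqrt{2K+1},\ \sqrt{-\min_M f_0}+1,\ \rho^{-1}\sqrt{-\min_M f_0}+1\bigr).
\]
For any $\lambda<-\min_M f_0$, condition \textit{(1)} is immediate from the second term, while the third ensures $\sqrt{\lambda}/L<\rho$, so that the quadratic bound from the previous step applies on $B_{\sqrt{\lambda}/L}(0)$. Then for $|x|\le \sqrt{\lambda}/L$,
\[
f_0(\phi_j(x))\,\ge\, -K\,|x|^2 \,\ge\, -\frac{K\lambda}{L^2} \,>\, -\frac{\lambda}{2},
\]
since $L^2>2K$. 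This is exactly condition \textit{(2)}, and the lemma is proved.

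There is no real obstacle here; the only subtle point is recognising that non-degeneracy forces the maxima to be isolated and hence finite in number, which is what allows the Taylor expansion constants $K_j$ and chart radii $r_j$ to be replaced by uniform constants and in turn gives a single $L$ that works for every maximum point and every admissible $\lambda$.
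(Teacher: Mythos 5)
Your proof is correct and follows essentially the same route as the paper: Taylor expansion of $f_0$ at each (necessarily finite, by non-degeneracy and compactness) maximum point in conformal coordinates, a quadratic lower bound $f_0 \ge -K|x|^2$, and a choice of $L$ large enough that $L^2 > 2K$ and $\sqrt{\lambda}/L < 1$ for all admissible $\lambda$. The only cosmetic difference is that you absorb the cubic remainder into the quadratic constant on a fixed small ball, whereas the paper keeps it and bounds $\lambda^{3/2}/L^3 \le \lambda/L^2$ using $\sqrt{\lambda}/L<1$; both are equivalent.
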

\begin{proof}
Fix a point of maximum $p_i$ of $f_0$. Then, by choosing local conformal coordinates $x$ around $p_i\simeq 0$, we
have
\[
f_0(x) = \frac{1}{2} D^2f_0(0)[x,x] + O(|x|^3) \;\; \mbox{in}\;\; B_1(0)\subset\R^2
\]
From the beginning we may assume that $\frac{1}{2} D^2f_0(0)[x,x]\geq -c_1 |x|^2$, where $c_1>0$. 
Then, for $x\in B_1(0)$, we have 
\[
f_0(x)\geq -c_1|x|^2 - c_2|x|^3\geq -c(|x|^2 + |x|^3),
\] 
with $c_2>0$ and $c:=\max(c_1,c_2)>0$. 

Pick $\lambda>0$ and $L_i>0$ to be determined later, such that $\sqrt{\lambda}/L_i<1$, namely $\lambda<L_i^2$.
Then, on the ball $B_{\frac{\sqrt{\lambda}}{L_i}}(0)$ we get
\[
f_0(x) > -c\left( \frac{\lambda}{L_i^2} + \frac{\lambda^{3/2}}{L_i^3}\right) \geq -\frac{\lambda}{2}
\]
where the last inequality holds if we choose $L_i^2 \geq 4c$. Choose $L_i>>0$ so that 
$-\min_M f_0 < L_i^2$. Taking $L:= \max_{p_i}L_i$, we obtain the desired result.
\end{proof}

\begin{proposition}\label{prop: construction of the comparison function}
For any $0<\sigma\leq 1$ there exists $\lambda_\sigma <1$, $\lambda_\sigma\in\Lambda$, such that for any 
$0<\lambda\leq\lambda_\sigma$ there holds:
\begin{equation}\label{eqn: stima del blow-up dell'energia}
\beta_\lambda \leq 2\pi M_0 \left(\sigma +2 \right)^2 \log(1/\lambda)  
\end{equation}
where $M_0$ is a constant which depends only on $(M,g_0)$ and the function $f_0$.
\end{proposition}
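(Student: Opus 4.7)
The discussion preceding the statement gives $\beta_\lambda=\min_{v\in\mathcal{E_\lambda}}E(v)$, so it suffices to build a competitor $v_\lambda\in\mathcal{E_\lambda}$ satisfying the claimed bound. The plan is to take a truncated logarithmic bubble concentrated at a maximum point of $f_0$, with exponent slightly larger than $1$ for integrability outside the bubble, and with a scale parameter tuned via an intermediate-value argument so that the single constraint $\int_M f_\lambda\,e^{2v}\,d\mu_{g_0}=0$ defining $\mathcal{E_\lambda}$ is satisfied.

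First I would fix a maximum point $p_0$ of $f_0$, work in local conformal coordinates $x$ around $p_0\simeq 0$ (so that $g_0=e^{2\phi(x)}|dx|^2$ on a Euclidean ball $B_{r_0}(0)$), and choose a cutoff $\eta\in C_c^\infty(B_{r_0})$ with $\eta\equiv 1$ on $B_{r_0/2}$; Lemma \ref{lemma: determinazione della costante L} gives $f_\lambda\geq\lambda/2$ on $B_{\sqrt\lambda/L}(0)$. For $\varepsilon>0$ set
\[
v_{\lambda,\varepsilon}(x):=-\frac{2+\sigma}{2}\,\eta(|x|)\,\log(\varepsilon+|x|^2),
\]
extended by $0$ off $B_{r_0}$, and let $J(\varepsilon):=\int_M f_\lambda\,e^{2v_{\lambda,\varepsilon}}\,d\mu_{g_0}$. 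As $\varepsilon\downarrow 0$ the weight $e^{2v_{\lambda,\varepsilon}}$ concentrates near $x=0$ where $f_\lambda>0$, so $J(\varepsilon)\to+\infty$; as $\varepsilon\uparrow\infty$ the weight tends to $0$ on $\{\eta>0\}$ and $J(\varepsilon)\to\int_{M\setminus\mathrm{supp}\,\eta}f_\lambda\,d\mu_{g_0}<0$ (for $r_0$ chosen small enough). Continuity then furnishes $\varepsilon(\lambda)\in(0,\infty)$ with $J(\varepsilon(\lambda))=0$, so $v_\lambda:=v_{\lambda,\varepsilon(\lambda)}\in\mathcal{E_\lambda}$.

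The main technical step is to quantify $\varepsilon(\lambda)$. Substituting $x=\sqrt\varepsilon\,y$ in the integral defining $J$ and Taylor-expanding $f_0$ at $0$ yields, to leading order as $\varepsilon,\lambda\downarrow 0$,
\[
J(\varepsilon)=c_1\,\lambda\,\varepsilon^{-1-\sigma}-c_2\,\varepsilon^{-\sigma}+O(1),
\]
with constants $c_1,c_2>0$ depending only on $(M,g_0)$ and $f_0$; note that $\sigma>0$ is needed for the convergence of $\int_{\R^2}|y|^2(1+|y|^2)^{-(2+\sigma)}\,dy$. Solving $J(\varepsilon)=0$ forces $\varepsilon(\lambda)$ to be comparable to $\lambda$: there exist $c,C>0$ depending only on $(M,g_0)$ and $f_0$ and $\lambda_0>0$ with $c\lambda\leq\varepsilon(\lambda)\leq C\lambda$ for all $\lambda\leq\lambda_0$. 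This step is the main obstacle, since both competing contributions carry the same power of $\varepsilon$ after the rescaling, so the analysis requires careful tracking of constants in the Taylor expansion of $f_0$ and in the conformal factor $e^{2\phi}$.

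Finally, by the conformal invariance of the $2$-dimensional Dirichlet integral,
\[
E(v_\lambda)=(2+\sigma)^2\int_{B_{r_0/2}}\frac{|x|^2}{(\varepsilon(\lambda)+|x|^2)^2}\,dx+O((2+\sigma)^2)=\pi(2+\sigma)^2\log\bigl(1/\varepsilon(\lambda)\bigr)+O((2+\sigma)^2),
\]
the $O((2+\sigma)^2)$ absorbing the annular region $r_0/2\le|x|\le r_0$ where $\eta$ varies. Inserting the lower bound $\varepsilon(\lambda)\geq c\lambda$ yields $E(v_\lambda)\leq\pi(2+\sigma)^2\log(1/\lambda)+O((2+\sigma)^2)\leq 2\pi M_0(2+\sigma)^2\log(1/\lambda)$ for a constant $M_0=M_0(M,g_0,f_0)$ and all $\lambda\leq\lambda_\sigma$ sufficiently small, as required.
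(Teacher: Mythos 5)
Your construction is correct, but it is genuinely different from the paper's. The paper also reduces to exhibiting one competitor in $\mathcal{E}_\lambda$, but its profile is a truncated logarithm with \emph{fixed geometry} tied to $\lambda$ (constant $\log(1/\lambda)$ on $B_{\lambda^{3/2}/L}$, equal to $\log(\sqrt\lambda/(L|x|))$ up to $|x|=\sqrt\lambda/L$, zero outside), and the single constraint is met by tuning the \emph{amplitude} $\alpha$ in $e^{2\alpha\varphi(\lambda)}$; a one-line lower bound on the volume integral then gives $\alpha\le \frac{\log(2L^2\|f_0\|_\infty/(m_0\pi))}{2\log(1/\lambda)}+2\le\sigma+2$, and the energy bound follows from $\beta_\lambda\le\alpha^2\int|\nabla\varphi(\lambda)|^2\le\alpha^2\cdot 2\pi M_0\log(1/\lambda)$. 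You instead fix the amplitude (exponent $2+\sigma$ on the regularized bubble) and tune the \emph{scale} $\varepsilon$, which shifts all the work into the estimate $\varepsilon(\lambda)\gtrsim\lambda$. The paper's route is cheaper at exactly the point you flag as the main obstacle: because the constraint is linear in the unknown only through the exponent, a crude one-sided bound suffices and no expansion of $f_0$ beyond $f_\lambda\ge\lambda/2$ on $B_{\sqrt\lambda/L}$ is needed. Your route yields a slightly sharper leading constant ($\pi(2+\sigma)^2$ rather than $2\pi M_0(\sigma+2)^2$), but note that the paper later reuses the \emph{specific} function $\varphi(\lambda)$ and the two-sided bound (3.3) on its Dirichlet energy in Propositions 3.6 and 3.10, so the explicit profile is not incidental.

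Two small cautions on your key step. First, only the lower bound $\varepsilon(\lambda)\ge c\lambda$ is actually used, and it follows from the one-sided estimate $0\ge f_0(x)\ge -C|x|^2$ alone: $J(\varepsilon)=0$ forces $c_1\lambda\,\varepsilon^{-1-\sigma}\le C'\varepsilon^{-\sigma}+O(1)$, whence $\varepsilon\ge c\lambda$; the matching upper bound $\varepsilon\le C\lambda$ (and the identification of $c_2>0$) would require non-degeneracy of $\mathrm{Hess}\,f_0(p_0)$, which is available in the setting of Theorem \ref{thm: first main result} but is not needed and would be better avoided. Second, the limit $J(\varepsilon)\to\int_{\{\eta=0\}}f_\lambda\,d\mu_{g_0}<0$ as $\varepsilon\to\infty$ requires not only $r_0$ small but also $\lambda\le\lambda_\sigma<\lambda_{max}$ (so that $\overline{f_0}+\lambda$ is uniformly negative); this is within the freedom the statement allows, but should be said.
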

\begin{proof}
Choose $p_0\in M$ such that $f_0(p_0)=0$ and choose conformal coordinates $x$ as in the previous Lemma so
that
\[
f_0(x) + \lambda \geq \frac{\lambda}{2}, \;\;\; x\in B_{\frac{\sqrt{\lambda}}{L}}(0) 
\]
for any $\lambda < -\min_M f_0$. Locally we can write $g_0= e^{2v_0}g_{\R^2}$ where 
$v_0\in C^{\infty}(\overline{B_1(0)})$ and $v_0(0)=0$. Fix $\lambda\in\Lambda$ with $\lambda<1$.
Define the function $\varphi(\lambda):M\to \R $ as
\begin{equation}\label{eqn: definizione della funzione comparison}
\varphi(\lambda)(x)= 
\left\{
\begin{array}{rl}
\log\left(\frac{\sqrt{\lambda}}{L|x|} \right), & 
\;\; \frac{\lambda^{3/2}}{L} \leq |x| \leq \frac{\sqrt{\lambda}}{L}      \\ 
\log\left(\frac{1}{\lambda} \right), & \;\; |x| \leq \frac{\lambda^{3/2}}{L} \\
0, & \;\; \frac{\sqrt{\lambda}}{L} \leq |x| \leq 1
\end{array}
\right.  
\end{equation}
extended to zero on the rest of $M$. We have $\varphi(\lambda)\in\Hone $ and $f_\lambda$ is positive on the support of $\varphi(\lambda)$. 

Consider the continuous function $z:\R\to\R$ defined by
$z(\alpha)= \int_M f_\lambda e^{2\alpha\varphi(\lambda)} d\mu_{g_0}$; then $z(0)< 0$ and
$\lim_{\alpha\to +\infty}z(\alpha) = +\infty$; thus there exists 
$\alpha=\alpha(\lambda)\in(0,+\infty)$ where 
\[
0 = z(\alpha) =\int_M f_\lambda e^{2\alpha\varphi(\lambda)} d\mu_{g_0},
\] 
that is, $\alpha \varphi(\lambda)\in \mathcal{E_\lambda} $.
 
We can give a more precise estimate of $\alpha$, as follows. Recall that $\vol(M;g_0)=1$, therefore
\begin{eqnarray}
0 = \int_M f_\lambda e^{2\alpha\varphi(\lambda)} d\mu_{g_0} 
  & \geq & \lambda/2\int_{B_{\frac{\sqrt{\lambda}}{L}}(0)} e^{2\alpha\varphi(\lambda) } e^{2v_0}dx
  -||f_0||_{\infty} \nonumber \\
  & > & \lambda/2\int_{B_{\frac{\lambda^{3/2}}{L}}(0)} e^{2\alpha\log(1/\lambda) } e^{2v_0}dx
  -||f_0||_{\infty} . \nonumber
\end{eqnarray}
Let $m_0:= \min_{B_1(0)}e^{2v_0}$ and $M_0:= \max_{B_1(0)}e^{2v_0}$. We obtain:
\[
\frac{m_0\pi}{2} \frac{\lambda^{4-2\alpha}}{L^2} \leq ||f_0||_{\infty} 
\]
or equivalently
\[
0< \alpha \leq \frac{\log\left( \frac{2L^2||f_0||_{\infty}}{m_0\pi}\right)}{2\log(1/\lambda)} + 2.
\]
Given $0<\sigma\leq 1$, there exists $\lambda_\sigma <1$, $\lambda_\sigma\in\Lambda$, such that for any
$0<\lambda\leq\lambda_\sigma$ we have 
$\frac{\log\left( \frac{2L^2||f_0||_{\infty}}{m_0\pi}\right)}{2\log(1/\lambda)} <\sigma$. Hence 
\[
 \alpha^2 \leq \left(\sigma +2 \right)^2.
\]
Next we have:
\[
\int_M |\nabla \varphi (\lambda)|_{g_0}^2 d\mu_{g_0} = 
\int_{B_{\frac{\sqrt{\lambda}}{L}}(0)\setminus B_{\frac{\lambda^{3/2}}{L}}(0)} |x|^{-2} e^{2v_0} dx;
\]
hence
\begin{equation}\label{eqn:estimates for the Dirichlet energy of the comparison function}
m_0 2\pi \log(1/\lambda) \leq \int_M |\nabla \varphi (\lambda)|_{g_0}^2 d\mu_{g_0}
\leq M_0 2\pi \log(1/\lambda).
\end{equation}
We conclude
\[
\beta_\lambda \leq \alpha^2 \int_M |\nabla \varphi (\lambda)|_{g_0}^2 d\mu_{g_0} \leq
2\pi M_0 \left(\sigma +2 \right)^2 \log(1/\lambda),
\]
which proves the Proposition.
\end{proof}
From Proposition \ref{prop: construction of the comparison function}, by means of elliptic estimates we obtain uniform $L^{\infty}$ -bounds for the set of solutions of (\ref{eqn: the Gauss curvature}), away from the boundary of $\Lambda$. More precisely:
\begin{proposition}\label{prop: stime L infinito uniformi}
Fix $0<\sigma\leq 1$ and let $\lambda_\sigma$ be as in Proposition (\ref{prop: construction of the comparison function}). Then for any $\lambda^{\ast}\in(0,\lambda_\sigma)$ we have
\begin{equation}\label{eqn: stime L infinito uniformi}
\sup_{\lambda^{\ast}\leq\lambda\leq\lambda_\sigma} ||u_\lambda||_{\infty} < +\infty .
\end{equation}
\end{proposition}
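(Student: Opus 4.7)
The key is to bound the Lagrange multiplier $\mu(\lambda)$ from above and away from zero on $[\lambda^*,\lambda_\sigma]$: once this is established, elliptic regularity together with Moser--Trudinger controls $w_\lambda$ in $L^\infty$, and hence $u_\lambda = w_\lambda + \tfrac12 \log \mu(\lambda)$.

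\textbf{Setup.} By Proposition~\ref{prop: construction of the comparison function}, $\beta_\lambda \leq 2\pi M_0 (\sigma+2)^2 \log(1/\lambda^*)$ for every $\lambda \in [\lambda^*,\lambda_\sigma]$, so $\|\nabla w_\lambda\|_{L^2}$ is uniformly bounded; combined with $\int_M w_\lambda\, d\mu_{g_0} = 0$ and Poincar\'e's inequality this gives a uniform bound in $\Hone$. Moser--Trudinger then yields $\|e^{\pm q w_\lambda}\|_{L^1} \leq C(q)$ for every $q<\infty$, uniformly in $\lambda \in [\lambda^*,\lambda_\sigma]$.

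\textbf{Bounds on $\mu$.} I argue both bounds by contradiction via compactness. Suppose $\mu_n := \mu(\lambda_n) \to +\infty$ along some sequence $\lambda_n \in [\lambda^*,\lambda_\sigma]$; extract a subsequence with $\lambda_n \to \lambda_\infty \in [\lambda^*,\lambda_\sigma]$ and $w_n := w_{\lambda_n} \rightharpoonup w_\infty$ weakly in $\Hone$ and strongly in every $L^p$. Moser--Trudinger domination upgrades this to $e^{2 w_n} \to e^{2 w_\infty}$ in $L^p$, hence $f_{\lambda_n} e^{2 w_n} \to f_{\lambda_\infty} e^{2 w_\infty}$ in $L^2$. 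The Euler--Lagrange equation $-\Delta_{g_0} w_n = \mu_n f_{\lambda_n} e^{2 w_n}$ forces $\|f_{\lambda_n} e^{2 w_n}\|_{H^{-1}} \leq \|\nabla w_n\|_{L^2}/\mu_n \to 0$, and since $L^2 \hookrightarrow H^{-1}$ continuously the $L^2$-limit must also be the $H^{-1}$-limit, so $f_{\lambda_\infty} e^{2 w_\infty} \equiv 0$. As $e^{2 w_\infty} > 0$ a.e., this forces $f_{\lambda_\infty} \equiv 0$, contradicting non-constancy of $f_0$. If instead $\mu_n \to 0$, then $\mu_n f_{\lambda_n} e^{2 w_n} \to 0$ in every $L^p$, so $W^{2,p}$-regularity with $\int_M w_n\, d\mu_{g_0} = 0$ gives $w_n \to 0$ in $C^{1,\alpha}(M)$; passing to the limit in $\int_M f_{\lambda_n} e^{2 w_n}\, d\mu_{g_0} = 0$ yields $\overline{f_0} + \lambda_\infty = 0$, i.e.\ $\lambda_\infty = \lambda_{max}$, contradicting $\lambda_\infty \leq \lambda_\sigma < \lambda_{max}$.

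\textbf{Conclusion and main obstacle.} With $|\log \mu(\lambda)|$ uniformly bounded, the right-hand side $\mu(\lambda) f_\lambda e^{2 w_\lambda}$ of the equation for $w_\lambda$ is uniformly bounded in every $L^p$; $W^{2,p}$-regularity gives $\|w_\lambda\|_{W^{2,p}} \leq C(p)$ uniformly, and the Sobolev embedding $W^{2,p} \hookrightarrow L^\infty$ (valid on the 2-dimensional torus for any $p>1$) then yields $\|w_\lambda\|_\infty \leq C$, whence $\|u_\lambda\|_\infty \leq C$. The delicate step is the upper bound on $\mu$: one must play the strong $L^2$-convergence of $f_{\lambda_n} e^{2 w_n}$ against the $H^{-1}$-decay forced by the Euler--Lagrange equation, an obstruction that disappears precisely because $f_0$ is non-constant.
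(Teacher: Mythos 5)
Your proof is correct, and its overall architecture coincides with the paper's: uniform $\Hone$ bound on $w_\lambda$ from Proposition \ref{prop: construction of the comparison function} and Poincar\'e, Moser--Trudinger control of $e^{\pm qw_\lambda}$, two-sided bounds on the Lagrange multiplier $\mu(\lambda)$, and finally elliptic regularity applied to $w_\lambda$ together with $u_\lambda=w_\lambda+\tfrac12\log\mu(\lambda)$. The lower bound on $\mu$ is obtained in both arguments by the same compactness/contradiction scheme (the limit would be harmonic, hence a constant lying in $\mathcal{C}_{\lambda_\infty}$ with $\lambda_\infty\le\lambda_\sigma<\lambda_{max}$, which is impossible). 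The one step you do genuinely differently is the upper bound on $\mu$. The paper tests the Euler--Lagrange identity with $v=f_\lambda$ and shows directly that $\int_M f_\lambda^2 e^{2w_\lambda}d\mu_{g_0}$ is bounded away from zero, via
\[
0<c\le\Bigl(\int_M f_\lambda\, d\mu_{g_0}\Bigr)^2\le\int_M f_\lambda^2 e^{2w_\lambda}d\mu_{g_0}\int_M e^{-2w_\lambda}d\mu_{g_0}
\]
and the Moser--Trudinger bound on the last factor; this yields $\mu(\lambda)\le C$ without any contradiction argument. Your route instead plays the $H^{-1}$-decay $\|f_{\lambda_n}e^{2w_n}\|_{H^{-1}}\le\|\nabla w_n\|_{L^2}/\mu_n\to0$ against strong $L^2$-convergence of $f_{\lambda_n}e^{2w_n}$, concluding $f_{\lambda_\infty}\equiv0$ and contradicting non-constancy of $f_0$. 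Both are valid; the paper's version is quantitative and uses only $\lambda_\sigma<\lambda_{max}$ (i.e.\ $\int_M f_\lambda\,d\mu_{g_0}<0$), while yours is softer and leans on the non-constancy of $f_0$. Two minor points worth spelling out if you write this up: the upgrade from a.e.\ convergence to $e^{2w_n}\to e^{2w_\infty}$ in $L^2$ needs the Vitali/uniform-integrability step furnished by the uniform $L^q$ bounds on $e^{2w_n}$; and in the lower-bound step the phrase ``$W^{2,p}$-regularity with $\int_M w_n\,d\mu_{g_0}=0$ gives $w_n\to0$'' compresses the intermediate observation that the $C^{1,\alpha}$-limit is harmonic, hence constant, hence zero.
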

\begin{observation}
Obviously, the estimate above can be improved by replacing the $L^{\infty}$ norm with "higher" norms (use
a bootstrap argument), but in the rest of the paper the estimate above will turn out to be sufficient for
all our purposes.
\end{observation}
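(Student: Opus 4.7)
The plan is to promote the uniform $L^\infty$-bound of Proposition \ref{prop: stime L infinito uniformi} to uniform $C^{k,\alpha}(M)$-bounds on $u_\lambda$ for every $k\in\N$ and every $\alpha\in [0,1)$, by a standard elliptic bootstrap applied to equation (\ref{eqn: the Gauss curvature}).

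First I would observe that on the compact interval $[\lambda^\ast,\lambda_\sigma]$ the function $f_\lambda=f_0+\lambda$ is uniformly bounded in every $C^k(M)$-norm, since $f_0$ is smooth and $\lambda$ only shifts $f_0$ by a bounded constant. Rewriting the PDE as
\[
-\Delta_{g_0} u_\lambda = h_\lambda, \qquad h_\lambda := f_\lambda e^{2u_\lambda},
\]
the $L^\infty$-bound (\ref{eqn: stime L infinito uniformi}) immediately gives $\|h_\lambda\|_{L^\infty(M)}\leq C$ uniformly in $\lambda\in [\lambda^\ast,\lambda_\sigma]$.

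Next I would apply $L^p$-elliptic regularity for $\Delta_{g_0}$ on the closed surface $M$ (in local conformal charts with a partition of unity, e.g.\ Gilbarg--Trudinger, Ch.~9) to obtain $\|u_\lambda\|_{W^{2,p}(M)} \leq C_p$ for every $p<\infty$. Sobolev embedding then yields $\|u_\lambda\|_{C^{1,\alpha}(M)}\leq C_\alpha$ for every $\alpha\in[0,1)$. Since $f_\lambda$ is uniformly $C^{1,\alpha}$ and the composition $u\mapsto e^{2u}$ preserves H\"older regularity with bounds depending only on the $C^{1,\alpha}$-norm of $u$, the quantity $h_\lambda$ is uniformly bounded in $C^{1,\alpha}(M)$; Schauder estimates applied to the PDE then give $\|u_\lambda\|_{C^{3,\alpha}(M)}\leq C_\alpha'$.

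Iterating this loop --- at each round the improved H\"older regularity of $u_\lambda$ raises that of $h_\lambda$ (via the chain rule and the smoothness of $f_\lambda$), and Schauder gains two further derivatives --- we reach uniform bounds in $C^{k,\alpha}(M)$ for every $k\in\N$ and every $\alpha\in[0,1)$. There is no genuine obstacle here: the $\lambda$-uniformity of the elliptic constants is automatic because $\Delta_{g_0}$ is a fixed operator independent of $\lambda$, and the $C^k$-norms of $f_\lambda=f_0+\lambda$ are bounded uniformly on the compact interval $[\lambda^\ast,\lambda_\sigma]$; the only point requiring minor care is that the Schauder and $W^{2,p}$ constants transferred through coordinate charts depend on the fixed atlas and on $(M,g_0)$ alone.
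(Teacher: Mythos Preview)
Your proposal is correct and carries out precisely what the paper indicates: the Observation in the paper has no proof beyond the parenthetical hint ``use a bootstrap argument'', and your sketch is exactly the standard elliptic bootstrap one would use to fill this in. There is nothing to compare, since the paper offers no alternative argument.
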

\begin{proof}[Proof of Proposition \ref{prop: stime L infinito uniformi}]
Because of the Sobolev embedding, it is enough to prove that
\[
\sup_{\lambda^{\ast}\leq\lambda\leq\lambda_\sigma} ||u_\lambda||_{H^2} < +\infty
\]
For $\lambda\in \left[\lambda^{\ast},  \lambda_\sigma\right]$ consider the minimizer 
$w_\lambda\in\mathcal{C_\lambda}$, which solves the equation
\[
-\Delta_{g_0}w_\lambda = \mu(\lambda)f_\lambda e^{2w_\lambda}
\]
where $\mu(\lambda)\in(0,\infty)$ is a Lagrange multiplier.

From (\ref{eqn: stima del blow-up dell'energia}), we have
\[
\sup_{{\lambda^{\ast}\leq\lambda\leq\lambda_\sigma}} \beta_\lambda 
\leq C \log(1/\lambda^{\ast}) .
\]
Hence, using Poincar\'e's inequality, we obtain $||w_\lambda||_{\Hone} < C$ uniformly in $\lambda$.
By the Moser-Trudinger's inequality, for every $p\geq 1$ then there holds:
\[
\sup_{{\lambda^{\ast}\leq\lambda\leq\lambda_\sigma}} \int_M e^{pw_\lambda} d\mu_{g_0} < C(p)< \infty.
\]
Our claim thus follows once we can give a lower and an upper bound for $\mu(\lambda)$. Inserting 
$v=f_\lambda$ in (\ref{eqn: moltiplicatori di Lagrange}), we obtain
\begin{equation}\label{eqn:integraz. per parti con test f lambda}
\int_M (\nabla w_\lambda,\nabla f_\lambda)_{g_0} d\mu_{g_0} = \mu(\lambda) \int_M (f_\lambda)^2 e^{2w_\lambda} d\mu_{g_0} .
\end{equation}
Since $\lambda_\sigma < -\overline{f_0}$, we have $0<c \leq \left( \int_M f_\lambda d\mu_{g_0} \right)^2$ uniformly in $\lambda\in \left[\lambda^{\ast}, \lambda_\sigma\right]$. Thus, by H\"older
\[
c< \int_M (f_\lambda)^2 e^{2w_\lambda} d\mu_{g_0} \int_M e^{-2w_\lambda} d\mu_{g_0} .
\]
Applying Moser-Trudinger's inequality, we get
\[
c< C\int_M (f_\lambda)^2 e^{2w_\lambda} d\mu_{g_0} \exp\left(\frac{1}{4\pi}\int_M |\nabla w_\lambda|_{g_0}^2 d\mu_{g_0}\right) .
\]
Thus, we see that $\int_M (f_\lambda)^2 e^{2w_\lambda} d\mu_{g_0} $ for 
$\lambda\in\left[\lambda^{\ast}, \lambda_\sigma\right]$ is uniformly bounded away from zero and, from
(\ref{eqn:integraz. per parti con test f lambda}), we obtain 
\[
\mu(\lambda) \leq C(||w_\lambda||_{\Hone}) < C < \infty
\]
uniformly in $\lambda$.
 
To see that $\mu(\lambda)$ is also away from zero, we argue by contradiction.
Assume that $\inf_{{\lambda^{\ast}\leq\lambda\leq\lambda_\sigma}} \mu (\lambda ) = 0$. Take a sequence $\lambda_n\in \left[\lambda^{\ast},\lambda_\sigma   \right] $ such that:
\[
\mu(\lambda_n ) \to 0
\]
and $\lambda_n \to \lambda\in \left[\lambda^{\ast},\lambda_\sigma   \right]$. From the estimates above,
we can assume, up to subsequences, that $w_{\lambda_n}\rightharpoonup w$ weakly in $\Hone$ and
$e^{2w_{\lambda_n}}\rightarrow e^{2w}$ strongly in $L^1$ as $n\to \infty$. Recall that we have
\[
\int_M (\nabla w_{\lambda_n},\nabla v)_{g_0} d\mu_{g_0} = 
 \mu(\lambda_n) \int_M f_{\lambda_n} v e^{2w_{\lambda_n}} d\mu_{g_0} 
\]
for any $v\in\Hone$. Passing to the limit $n\to\infty$ in this equation, we obtain 
\[
\int_M (\nabla w,\nabla v)_{g_0} d\mu_{g_0} = 0
\]
for each $v\in\Hone$, that is $w$ is harmonic. But then $w\equiv 0$ which is clearly impossible. Therefore, we have shown that for $\lambda\in \left[\lambda^{\ast},\lambda_\sigma \right]$, $\mu(\lambda)$ 
is uniformly away from 0 and infinity.

In conclusion, we get a uniform bound in $\lambda$ for 
\[
||\Delta_{g_0}w_{\lambda}||_{L^2} = ||\mu(\lambda)f_\lambda e^{2w_\lambda}||_{L^2}.
\]
Hence, by $L^p$-elliptic estimates (see for instance \cite{Kazdan-Warner74}, p. 24), we have
\[
||w_\lambda||_{H^2} \leq C \left\{ ||w_\lambda||_{H^1} +  ||\Delta_{g_0}w_{\lambda}||_{L^2}\right\} < C,
\]
uniformly for $\lambda\in \left[\lambda^{\ast},\lambda_\sigma \right]$. Recalling equation
(\ref{eqn: definizione della soluzione u lambda}) and the bounds on $\mu(\lambda)$, the bound
(\ref{eqn: stime L infinito uniformi}) follows.
\end{proof}

\begin{remark}\label{rmk: i massimi delle soluzioni tendono ad esplodere}
The Proposition above is false when $\lambda$ approaches zero. Indeed, an estimate like
$\sup_{0<\lambda\leq\delta} \max_M u_{\lambda} < \infty$ for some $\delta$ would lead,
in view of Schauder's estimates, to a uniform $C^{2,\alpha}$ bound for $u_\lambda$, which clearly contradicts Lemma \ref{lemma: the energy blows up}.
\end{remark}

In the following we show that the function $\beta_\lambda$ is monotone decreasing in a suitable right 
neighborhood of zero, which is crucial for our argument. As a consequence, $\beta_\lambda$ will be differentiable almost everywhere. 
\begin{proposition}\label{prop: teorema strumentale per il teorema di monotonicita'}
There exists $\lambda_0 \leq \min\left\{1/2, -\overline{f_0}/2 \right\}$ such that for any $\lambda^{\ast}\in (0,\lambda_0)$ there exists $\ell(\lambda^\ast)\in (\lambda^{\ast}, -\min_M f_0)$ such that for any 
$\lambda\in (\lambda^\ast, \ell(\lambda^\ast)) $ we have
\[
\beta_\lambda < \beta_{\lambda^{\ast}}
\]
Furthermore, choosing $\lambda\in(0,\lambda_0), \lambda > \lambda^{\ast} $, 
and defining $\ell(\lambda)$ as above, we have 
$\ell(\lambda) - \lambda\geq \tau=\tau(\lambda^{\ast})>0 $ where $\tau$ is a constant not depending on $\lambda$.
\end{proposition}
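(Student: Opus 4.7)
My plan is to decrease $\beta_\lambda$ by producing, for each $\lambda$ slightly above the starting point, an explicit competitor in $\mathcal{E}_\lambda$ obtained by a local downward perturbation of the minimizer $w_{\lambda^{\ast}}$, and then to extract a uniform decrease rate from the bounds of Proposition \ref{prop: stime L infinito uniformi}. Fix $\sigma=1$ in that Proposition and let $\lambda_1:=\lambda_\sigma$ be the threshold produced there; set
\[
\lambda_0 := \min\left\{\tfrac12,\ -\tfrac{\overline{f_0}}{2},\ \lambda_1\right\}.
\]
For every $\lambda^{\ast}\in(0,\lambda_0)$, inspection of the proof of Proposition \ref{prop: stime L infinito uniformi} yields uniform bounds $\|w_\lambda\|_{L^\infty}\le C_1(\lambda^{\ast})$ and $0<c_2(\lambda^{\ast})\le\mu(\lambda)\le C_2(\lambda^{\ast})$ on $\lambda\in[\lambda^{\ast},\lambda_0]$. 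Using Lemma \ref{lemma: determinazione della costante L}, pick a maximum point $p_0$ of $f_0$ and a smooth nonnegative, nonzero cut-off $\chi\in C_c^\infty(M)$ supported in a conformal ball around $p_0$ so small that $f_\lambda\ge\lambda^{\ast}/2$ on $\mathrm{supp}(\chi)$ for every $\lambda\in[\lambda^{\ast},-\min_M f_0)$.

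Given $\lambda\in[\lambda^{\ast},\lambda_0)$ and $\lambda'>\lambda$, consider $v_t:=w_\lambda-t\chi$ and
\[
H(t,\lambda'):=\int_M f_{\lambda'}\,e^{2v_t}\,d\mu_{g_0}.
\]
Then $H(0,\lambda')=(\lambda'-\lambda)\int_M e^{2w_\lambda}\,d\mu_{g_0}>0$ and $\partial_t H(0,\lambda)=-2\int_M f_\lambda\chi e^{2w_\lambda}\,d\mu_{g_0}<0$, so the implicit function theorem yields a smooth $t=t_\lambda(\lambda')$ defined on a right neighbourhood of $\lambda$, with $t_\lambda(\lambda)=0$, $v_{t_\lambda(\lambda')}\in\mathcal{E}_{\lambda'}$, and a linear bound $t_\lambda(\lambda')\le B(\lambda^{\ast})(\lambda'-\lambda)$ whose constant $B$ depends only on the uniform estimates above.

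Expanding the Dirichlet energy of $v_t$ and using the Euler--Lagrange identity (\ref{eqn: moltiplicatori di Lagrange}) with $\sigma=0$ tested against $\chi$,
\[
E(v_t)=\beta_\lambda-2t\,\mu(\lambda)\int_M f_\lambda\chi e^{2w_\lambda}\,d\mu_{g_0}+t^2 E(\chi).
\]
Since $\chi\ge 0$, $f_\lambda\ge\lambda^{\ast}/2$ on $\mathrm{supp}(\chi)$, and $\mu(\lambda)$, $e^{2w_\lambda}$ are bounded below uniformly on $[\lambda^{\ast},\lambda_0]$, the linear coefficient admits a uniform lower bound $A(\lambda^{\ast})>0$. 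Choose $\tau(\lambda^{\ast})>0$ with both $\tau<\lambda_0$ and $B\tau<A/E(\chi)$. Then for every starting point $\lambda\in[\lambda^{\ast},\lambda_0)$ and every $\lambda'\in(\lambda,\lambda+\tau)$ the value $t_\lambda(\lambda')$ lies in $(0,A/E(\chi))$, where the quadratic in $t$ displayed above is strictly negative, so $\beta_{\lambda'}\le E(v_{t_\lambda(\lambda')})<\beta_\lambda$. Taking $\ell(\lambda):=\lambda+\tau(\lambda^{\ast})$ delivers both assertions at once.

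The delicate point is the uniform two-sided control on $\mu(\lambda)$ together with the uniform $L^\infty$ bound on $w_\lambda$, which is precisely what forces $\lambda_0$ to sit below the Proposition \ref{prop: stime L infinito uniformi} threshold and prevents $A$, $B$, $\tau$ from being chosen independently of $\lambda^{\ast}$. That $\tau$ must degenerate as $\lambda^{\ast}\downarrow 0$ is consistent with Remark \ref{rmk: i massimi delle soluzioni tendono ad esplodere}: a more ambitious approach trying to differentiate $\lambda\mapsto\beta_\lambda$ directly would run into exactly this loss of a priori control near the boundary of $\Lambda$.
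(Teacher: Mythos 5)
Your argument is correct, but it takes a genuinely different route from the paper's. The paper perturbs the minimizer $u^{\ast}=u_{\lambda^{\ast}}$ in the direction of the singular, $\lambda^{\ast}$-dependent comparison function $\varphi^{\ast}=\varphi(\lambda^{\ast})$ of Proposition \ref{prop: construction of the comparison function}, parametrizes the curve by $\varepsilon\mapsto u^{\ast}-\varepsilon\varphi^{\ast}$, and reads off the attained curvature level through the map $I$ of (\ref{eqn: definizione della mappa I}): the heart of that proof is Lemma \ref{lemma: proprieta' di h}, which shows that $h(\varepsilon)=I(u^{\ast}-\varepsilon\varphi^{\ast})$ is strictly increasing on $[0,\varepsilon^{\ast}]$ (via a continuity/maximality argument propagating the sign of $h'$), so that $\beta_{h(\varepsilon)}\le E(u^{\ast}-\varepsilon\varphi^{\ast})<\beta_{\lambda^{\ast}}$ sweeps out $(\lambda^{\ast},h(\varepsilon^{\ast}))$; the lower bound on $\ell(\lambda)-\lambda$ is then extracted from an explicit lower bound for $h'$ which is monotone in the starting point. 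You instead fix a smooth bump $\chi$, parametrize by the target level $\lambda'$ and solve the constraint by the implicit function theorem, which replaces Lemma \ref{lemma: proprieta' di h} by the single observation $\partial_tH<0$; and you obtain the uniform $\tau$ by running the construction from every $\lambda\in[\lambda^{\ast},\lambda_0)$ with constants depending only on $\lambda^{\ast}$. This is arguably cleaner for Proposition \ref{prop: teorema strumentale per il teorema di monotonicita'} itself; the paper's choice of $\varphi^{\ast}$ pays off later, in Proposition \ref{prop: bound for lambda x volume}, where the same $h$, $\varepsilon^{\ast}$ and the explicit value of $h'(0)$ are recycled to bound $\lambda_n\int_Me^{2u_n}\,d\mu_{g_0}$. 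One step you should make airtight: the branch $t_\lambda(\lambda')$ is only produced locally by the implicit function theorem, and your constant $B$ itself requires an a priori bound on $t$, since the denominator $\int_M f_{\lambda'}\chi e^{2v_t}d\mu_{g_0}$ in $dt_\lambda/d\lambda'$ degenerates as $t\to\infty$. Fix $T_0:=A/E(\chi)$ first, bound the derivative for $t\le T_0$ only, and run a standard continuation argument to conclude $t_\lambda(\lambda')\le B(\lambda'-\lambda)\le T_0$ on all of $(\lambda,\lambda+\tau)$; since $t\mapsto H(t,\lambda')$ is strictly decreasing for every $t\ge0$ (because $f_{\lambda'}>0$ on the support of $\chi$), the zero is unique and the branch is globally well defined there. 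With that detail supplied the proof is complete.
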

\begin{corollary}\label{cor: monotonicita' di beta lambda}
There exists $\lambda_0 \leq \min\left\{1/2, -\overline{f_0}/2 \right\}$ such that $\beta_\lambda$ is strictly monotone decreasing on the interval $(0,\lambda_0)$.  
\end{corollary}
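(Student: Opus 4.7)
The corollary is exactly what one gets from Proposition~\ref{prop: teorema strumentale per il teorema di monotonicita'} by a chaining argument, and the key input is the second (``uniform step size'') assertion of that proposition. The plan is to fix any pair $0<a<b<\lambda_0$ and build a finite chain of strict decreases $\beta_{\mu_0}>\beta_{\mu_1}>\dots>\beta_{\mu_N}$ with $\mu_0=a$ and $\mu_N=b$.

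Concretely, I would first apply the proposition with the auxiliary base point $\lambda^*=a/2\in(0,\lambda_0)$ to obtain a constant $\tau:=\tau(a/2)>0$ enjoying
\[
\ell(\lambda)-\lambda\geq\tau\qquad\text{for every }\lambda\in(a/2,\lambda_0),
\]
and in particular for every $\lambda\in[a,b]$. This is the uniform ``floor'' on the step size that allows a chain with a fixed stride to cover the whole interval in finitely many moves.

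Next, pick any integer $N\geq 2(b-a)/\tau$ and define $\mu_k:=\min\{a+k\tau/2,\,b\}$ for $k=0,1,\dots,N$, so that $\mu_0=a$, $\mu_N=b$, and consecutive $\mu_k$ differ by at most $\tau/2$. Whenever $\mu_{k+1}>\mu_k$ we have
\[
0<\mu_{k+1}-\mu_k\leq\tau/2<\tau\leq \ell(\mu_k)-\mu_k,
\]
hence $\mu_{k+1}\in(\mu_k,\ell(\mu_k))$. Applying the first half of Proposition~\ref{prop: teorema strumentale per il teorema di monotonicita'} with $\lambda^*=\mu_k$ and $\lambda=\mu_{k+1}$ then gives $\beta_{\mu_{k+1}}<\beta_{\mu_k}$. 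Concatenating these strict inequalities across the (at least one, in fact at most $N$) indices where $\mu_{k+1}\neq\mu_k$ yields $\beta_b=\beta_{\mu_N}<\beta_{\mu_0}=\beta_a$, which is the claimed strict monotonicity on $(0,\lambda_0)$.

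There is essentially no obstacle beyond Proposition~\ref{prop: teorema strumentale per il teorema di monotonicita'} itself; the only things to watch are (i) to keep the stride \emph{strictly} below $\tau$ so that each $\mu_{k+1}$ falls in the \emph{open} interval $(\mu_k,\ell(\mu_k))$ where strict decrease is asserted, and (ii) to verify $a/2\in(0,\lambda_0)$ and $\{a,b\}\subset(a/2,\lambda_0)$ so that the uniform bound $\tau(a/2)$ is legitimately applicable at every base point of the chain; both are immediate from $0<a<b<\lambda_0$.
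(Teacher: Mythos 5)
Your proposal is correct and is precisely the argument the paper leaves implicit: the corollary follows from Proposition \ref{prop: teorema strumentale per il teorema di monotonicita'} by chaining strict decreases, and the ``Furthermore'' clause giving the uniform step size $\ell(\lambda)-\lambda\geq\tau(\lambda^{\ast})$ is exactly what guarantees the chain from $a$ to $b$ terminates in finitely many steps. Your care in keeping the stride strictly below $\tau$ and in anchoring the uniform bound at the auxiliary base point $a/2$ is exactly right.
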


In order to prepare for the proof of Proposition \ref{prop: teorema strumentale per il teorema di monotonicita'}, define the map $I:\Hone \to \R$ by letting
\begin{equation}\label{eqn: definizione della mappa I}
I(u) := -\frac{\int_M f_0 e^{2u} d\mu_{g_0}}{\int_M e^{2u} d\mu_{g_0}}.  
\end{equation}
Note that for any $u\in\Hone$ there holds
\begin{equation}\label{eqn:proprieta' cardine della mappa I}
u\in\mathcal{E}_{I(u)}  .
\end{equation}
Moreover, we have $I(u)\in(0,-min_M f_0) $ and $I$ is smooth with first derivative given by the following expression:
\begin{equation}\label{eqn: differenziale di I}
DI(u)[v] = -2 \,\frac{\int_M f_{I(u)} v\,e^{2u} d\mu_{g_0}}{\int_M e^{2u} d\mu_{g_0}}\;\;\;\; u,v\in\Hone . 
\end{equation}
Fix $0 < \lambda_0 \leq \min\left\{1/2, -\overline{f_0}/2 \right\}$ and for $\lambda^{\ast}\in(0,\lambda_0)$
let
\begin{equation}\label{eqn: prima funzione ausiliaria}
A(\lambda^{\ast}) := \sup_{\lambda^{\ast}\leq\lambda <\lambda_0} \sup_M e^{2u_\lambda}  
\end{equation}
and
\begin{equation}\label{eqn: seconda funzione ausiliaria}
a(\lambda^{\ast}) := \inf_{\lambda^{\ast}\leq\lambda <\lambda_0} \inf_M e^{2u_\lambda}.  
\end{equation}
Observe that in view of Proposition \ref{prop: stime L infinito uniformi}, the above functions are
well defined if $\lambda_0$ is taken small enough, and that $0< a(\lambda^\ast)\leq A(\lambda^{\ast})< +\infty$.
Finally, $A$ is monotone decreasing and $a$ is monotone increasing in $\lambda^{\ast}$.

We are ready to prove our Proposition.
\subsection{Proof of Proposition \ref{prop: teorema strumentale per il teorema di monotonicita'}}
\begin{proof}
Fix for convenience $\sigma=1$ and let $\lambda_\sigma$ as given in Proposition \ref{prop: construction of the comparison function}. Consider $\min\left\{\lambda_0, \lambda_\sigma \right\}$, which with a little
abuse of notation we will still call $\lambda_0$.

We consider $\lambda^{\ast}\in(0,\lambda_0)$ and 
$\beta_{\lambda^{\ast}}= \int_M |\nabla u^{\ast}|_{g_0}^2 d\mu_{g_0} $, where we have used the abbreviation
$u^{\ast}\equiv u_{\lambda^{\ast}}$. We also set $\varphi^{\ast}\equiv\varphi(\lambda^{\ast})$, where
$\varphi(\lambda^{\ast})$ is the comparison function defined by the equation 
(\ref{eqn: definizione della funzione comparison}). (We recall that $\lambda^{\ast} <\lambda_0\leq 1/2<1$, therefore
$\varphi^{\ast}$ is well defined.) Thus, we have inequality 
(\ref{eqn:estimates for the Dirichlet energy of the comparison function}) and
\begin{eqnarray*}
\int_M (\nabla u^{\ast},\nabla \varphi^{\ast})_{g_0} d\mu_{g_0} & = &
\int_M f_{\lambda^{\ast}} \varphi^{\ast}e^{2u^{\ast}} d\mu_{g_0}  \\
& > & \frac{\lambda^{\ast}}{2} \log(1/\lambda^{\ast}) \int_{B_{\frac{(\lambda^\ast)^{3/2}}{L}}(0)} e^{2u^{\ast} }e^{2v_0}dx ,\nonumber
\end{eqnarray*}
since $f_{\lambda^{\ast}} \varphi^{\ast} \geq 0$ and since $f_{\lambda^{\ast}} \geq \lambda^{\ast}/2$ in the ball $B_{\frac{\sqrt{\lambda^{\ast}}}{L}}(0)$.
Observing that
\begin{equation}\label{eqn: stima dell'integrale di lambda star alla 3/2}
\int_{B_{\frac{(\lambda^\ast)^{3/2}}{L}}(0)}e^{2u^{\ast} }e^{2v_0} dx
\geq \frac{m_0\pi}{L^2} (\lambda^{\ast})^3 a(\lambda^{\ast})  
\end{equation}
where $a(\lambda^\ast)$ is defined by (\ref{eqn: seconda funzione ausiliaria}) and $m_0=\min_{B_1(0)}e^{2v_0}$ as above, we obtain
\begin{eqnarray}\label{eqn: l'integrale che coinvolge phi star e' positivo}
\int_M (\nabla u^{\ast},\nabla \varphi^{\ast})_{g_0} d\mu_{g_0} & = &
\int_M f_{\lambda^{\ast}} \varphi^{\ast}e^{2u^{\ast}} d\mu_{g_0}  \\
& > & \frac{m_0\pi}{2L^2}\log(1/\lambda^{\ast})(\lambda^{\ast})^4  a(\lambda^\ast) \; >0 . \nonumber
\end{eqnarray}
Moreover, using equations (\ref{eqn: stima del blow-up dell'energia}) and 
(\ref{eqn:estimates for the Dirichlet energy of the comparison function}), from H\"older's inequality we deduce
\[
\int_M (\nabla u^{\ast},\nabla \varphi^{\ast})_{g_0} d\mu_{g_0}\leq 6\pi M_0 \log(1/\lambda^{\ast}).
\]   
Hence, defining 
\begin{equation}\label{eqn: definzione di epsilon star}
 \varepsilon^{\ast} := 2 \frac{\int_M (\nabla u^{\ast},\nabla \varphi^{\ast})_{g_0} d\mu_{g_0}}
 {\int_M |\nabla \varphi^{\ast}|_{g_0}^2 d\mu_{g_0}} 
\end{equation}
and using inequality (\ref{eqn: l'integrale che coinvolge phi star e' positivo}) and once more
(\ref{eqn:estimates for the Dirichlet energy of the comparison function}), we eventually get 
\begin{equation}\label{eqn: bounds for epsilon star}
\frac{m_0}{2 M_0 L^2}(\lambda^{\ast})^4 a(\lambda^{\ast}) < \varepsilon^{\ast} < \frac{6M_0}{m_0} .
\end{equation}
In particular, $\varepsilon^{\ast}$ is positive. (Recall that $M_0:= \max_{B_1(0)}e^{2v_0}$).

For $\varepsilon\in [-\varepsilon^{\ast}, \varepsilon^{\ast} ]$ consider the function
$u^{\ast}-\varepsilon \varphi^{\ast}\in\Hone$. Recall that by (\ref{eqn:proprieta' cardine della mappa I}), 
we trivially have
\[
u^{\ast}-\varepsilon \varphi^{\ast}\in\mathcal{E}_{I(u^{\ast}-\varepsilon \varphi^{\ast})} .
\]
\begin{lemma}\label{lemma: I risultato di monotonicita' di beta lambda}
For $\varepsilon\in (0, \varepsilon^{\ast} )$ we have
\begin{equation}\label{eqn: prima stima di monotonicita' sui beta}
\beta_{I(u^{\ast}-\varepsilon \varphi^{\ast})} < \beta_{\lambda^{\ast}} .
\end{equation}
\end{lemma}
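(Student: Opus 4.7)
The plan is to use $u^{\ast}-\varepsilon\varphi^{\ast}$ as an explicit competitor in $\mathcal{E}_{I(u^{\ast}-\varepsilon\varphi^{\ast})}$. Recall that, as noted earlier in the paper, for $\lambda\in(0,\lambda_{max})$ one has $\beta_\lambda = \min_{v\in\mathcal{E}_\lambda} E(v)$, while for $\lambda\in[\lambda_{max},-\min_M f_0)$ we have $\beta_\lambda=0$ by definition. Thus in either case $\beta_{I(u^{\ast}-\varepsilon\varphi^{\ast})} \leq E(u^{\ast}-\varepsilon\varphi^{\ast})$, since by (\ref{eqn:proprieta' cardine della mappa I}) the function $u^{\ast}-\varepsilon\varphi^{\ast}$ lies in $\mathcal{E}_{I(u^{\ast}-\varepsilon\varphi^{\ast})}$ and the Dirichlet energy is non-negative. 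So it suffices to prove the strict inequality
\[
E(u^{\ast}-\varepsilon\varphi^{\ast}) < \beta_{\lambda^{\ast}} \quad \text{for every } \varepsilon\in(0,\varepsilon^{\ast}).
\]

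This is a one-line computation. Expanding the squared gradient,
\[
E(u^{\ast}-\varepsilon\varphi^{\ast}) = E(u^{\ast}) - 2\varepsilon \int_M (\nabla u^{\ast},\nabla\varphi^{\ast})_{g_0}\d\mu_{g_0} + \varepsilon^2 \int_M |\nabla\varphi^{\ast}|_{g_0}^2 \d\mu_{g_0}.
\]
Substituting $E(u^{\ast})=\beta_{\lambda^{\ast}}$ and the definition (\ref{eqn: definzione di epsilon star}) of $\varepsilon^{\ast}$, this becomes
\[
E(u^{\ast}-\varepsilon\varphi^{\ast}) = \beta_{\lambda^{\ast}} + \varepsilon(\varepsilon - \varepsilon^{\ast})\int_M |\nabla\varphi^{\ast}|_{g_0}^2\d\mu_{g_0},
\]
and the right-hand side is strictly less than $\beta_{\lambda^{\ast}}$ whenever $0<\varepsilon<\varepsilon^{\ast}$, since the gradient integral of $\varphi^{\ast}$ is strictly positive by (\ref{eqn:estimates for the Dirichlet energy of the comparison function}). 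Combining with the inequality from the first paragraph gives
\[
\beta_{I(u^{\ast}-\varepsilon\varphi^{\ast})} \leq E(u^{\ast}-\varepsilon\varphi^{\ast}) < \beta_{\lambda^{\ast}},
\]
which is (\ref{eqn: prima stima di monotonicita' sui beta}).

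The one point that deserves a brief remark is the case $I(u^{\ast}-\varepsilon\varphi^{\ast}) \in [\lambda_{max}, -\min_M f_0)$, where $\beta$ is defined to be $0$; there the claim is immediate because $\beta_{\lambda^{\ast}}>0$ (the minimizer $w_{\lambda^{\ast}}\in\mathcal{C}_{\lambda^{\ast}}$ is non-constant as $\mathcal{C}_{\lambda^{\ast}}$ contains no constants). No real obstacle is anticipated: the whole lemma is essentially a variational comparison, whose only non-trivial ingredient is the positivity of $\varepsilon^{\ast}$, which was already established in (\ref{eqn: bounds for epsilon star}) via the sign of $f_{\lambda^{\ast}}\varphi^{\ast}$ on the support of $\varphi^{\ast}$.
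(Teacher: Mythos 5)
Your proof is correct and follows essentially the same route as the paper: take $u^{\ast}-\varepsilon\varphi^{\ast}$ as a competitor in $\mathcal{E}_{I(u^{\ast}-\varepsilon\varphi^{\ast})}$, expand the Dirichlet energy, and use the definition of $\varepsilon^{\ast}$ to get the strict decrease. Your explicit remark on the case $I(u^{\ast}-\varepsilon\varphi^{\ast})\geq\lambda_{max}$ (where $\beta$ is defined to vanish) is a small point the paper leaves implicit, but the argument is the same.
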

\begin{proof}
By expanding the Dirichlet energy, for $\varepsilon\in (0, \varepsilon^{\ast} )$ we obtain
\begin{eqnarray*}
\beta_{I(u^{\ast}-\varepsilon \varphi^{\ast})} \leq E(u^{\ast}-\varepsilon \varphi^{\ast}) & = &
E(u^{\ast}) -2\varepsilon \int_M (\nabla u^{\ast},\nabla \varphi^{\ast})_{g_0} d\mu_{g_0} 
+\varepsilon^2 \int_M |\nabla \varphi^{\ast}|_{g_0}^2 d\mu_{g_0} \\
& = &
E(u^{\ast}) -\varepsilon(\varepsilon^{\ast} - \varepsilon) \int_M |\nabla \varphi^{\ast}|_{g_0}^2 d\mu_{g_0} \\
& < & E(u^{\ast}) = \beta_{\lambda^{\ast}}, 
\end{eqnarray*}
as claimed.
\end{proof}
The next step is to understand whether the value $I(u^{\ast}-\varepsilon \varphi^{\ast})$ is greater or
smaller than $\lambda^{\ast}=I(u^{\ast})$. In order to do that, we introduce the function
$h:[-\varepsilon^{\ast}, \varepsilon^{\ast} ]\to (0,-min_M f_0)$
given by:
\begin{equation}\label{eqn: funzione ausiliaria h}
  h(\varepsilon ) := I(u^{\ast}-\varepsilon \varphi^{\ast}) .
\end{equation}
By definition of $I$, we have $h\in C^1(\left[-\varepsilon^{\ast},\varepsilon^{\ast} \right])$; moreover,
there holds:
\begin{lemma}\label{lemma: proprieta' di h}
We have that
\[
h'>0 \;\; \mbox{on} \;\; [0,\varepsilon^{\ast}] .
\]
As a consequence, $h$ is smoothly invertible on $[0, \varepsilon^{\ast}]$. 
\end{lemma}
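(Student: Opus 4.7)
The plan is to compute $h'(\varepsilon)$ via the chain rule and the expression (\ref{eqn: differenziale di I}) for $DI$, obtaining
\[
h'(\varepsilon) = DI(u^{\ast}-\varepsilon\varphi^{\ast})[-\varphi^{\ast}] = 2\,\frac{\int_M f_{h(\varepsilon)}\,\varphi^{\ast}\, e^{2(u^{\ast}-\varepsilon\varphi^{\ast})}\,d\mu_{g_0}}{\int_M e^{2(u^{\ast}-\varepsilon\varphi^{\ast})}\,d\mu_{g_0}}.
\]
Since the denominator is positive, the sign of $h'(\varepsilon)$ coincides with the sign of the numerator. Thus the whole matter reduces to controlling the sign of $f_{h(\varepsilon)}$ on $\mathrm{supp}\,\varphi^{\ast}$.

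The key observation is that $\varphi^{\ast}\geq 0$ and $\mathrm{supp}\,\varphi^{\ast}\subset B_{\sqrt{\lambda^{\ast}}/L}(0)$, where by Lemma \ref{lemma: determinazione della costante L} there holds $f_0\geq -\lambda^{\ast}/2$. Consequently, on $\mathrm{supp}\,\varphi^{\ast}$,
\[
f_{h(\varepsilon)} = f_0 + h(\varepsilon) \geq h(\varepsilon) - \lambda^{\ast}/2,
\]
so it suffices to show that $h(\varepsilon)\geq \lambda^{\ast}$ throughout $[0,\varepsilon^{\ast}]$. Once this is known, $f_{h(\varepsilon)}\geq\lambda^{\ast}/2>0$ on $\mathrm{supp}\,\varphi^{\ast}$, and since $\varphi^{\ast}>0$ on the nontrivial subset $B_{(\lambda^{\ast})^{3/2}/L}(0)$, the numerator is strictly positive, giving $h'(\varepsilon)>0$.

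The required lower bound on $h$ follows by a continuity/bootstrap argument. We have $h(0)=I(u^{\ast})=\lambda^{\ast}$ by (\ref{eqn:proprieta' cardine della mappa I}). Set
\[
\varepsilon_0 := \inf\{\varepsilon\in[0,\varepsilon^{\ast}]: h(\varepsilon)\leq \lambda^{\ast}/2\},
\]
with the convention $\varepsilon_0:=\varepsilon^{\ast}$ if the set is empty. For $\varepsilon\in[0,\varepsilon_0)$ we have $h(\varepsilon)>\lambda^{\ast}/2$, so the computation above yields $h'(\varepsilon)>0$ on $[0,\varepsilon_0)$. Hence $h$ is strictly increasing there, and in particular $h(\varepsilon)>h(0)=\lambda^{\ast}$ for $\varepsilon\in(0,\varepsilon_0)$. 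If $\varepsilon_0<\varepsilon^{\ast}$, by continuity of $h$ this gives $h(\varepsilon_0)\geq\lambda^{\ast}>\lambda^{\ast}/2$, contradicting the definition of $\varepsilon_0$. Therefore $\varepsilon_0=\varepsilon^{\ast}$, and by continuity of $h$ at the endpoint we conclude $h(\varepsilon)\geq \lambda^{\ast}$ on all of $[0,\varepsilon^{\ast}]$, yielding $h'>0$ throughout. Smooth invertibility on $[0,\varepsilon^{\ast}]$ then follows from the $C^1$ regularity of $h$ together with the strict positivity of $h'$.

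Nothing in this argument is genuinely difficult; the only mild subtlety is the bootstrap step, which is needed because we have no \emph{a priori} monotonicity of $h$ — one must first use the pointwise sign condition on $\mathrm{supp}\,\varphi^{\ast}$ to obtain local monotonicity and then propagate it to the entire interval by continuity. The factor $\lambda^{\ast}/2$ provided by Lemma \ref{lemma: determinazione della costante L} leaves exactly enough slack to run this closed/open argument.
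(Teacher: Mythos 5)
Your proof is correct and follows essentially the same route as the paper's: compute $h'$ by the chain rule from (\ref{eqn: differenziale di I}), reduce its positivity to the sign of $f_{h(\varepsilon)}$ on the support of $\varphi^{\ast}$ via Lemma \ref{lemma: determinazione della costante L}, and propagate $h'>0$ from $\varepsilon=0$ by a continuation argument. The only (equivalent) bookkeeping difference is that the paper continues on the maximal interval where $h'>0$, using $f_{h(\varepsilon)}\geq h(\varepsilon)/2$ on the larger ball $B_{\sqrt{h(\varepsilon)}/L}(0)\supset B_{\sqrt{\lambda^{\ast}}/L}(0)$, whereas you stop at the first point where $h$ could drop to $\lambda^{\ast}/2$.
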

Postponing the proof of the lemma, we continue with the proof of Proposition
\ref{prop: teorema strumentale per il teorema di monotonicita'}.

In view of Lemma \ref{lemma: proprieta' di h}, we have $h(\varepsilon^{\ast}) > \lambda^{\ast}$. 
Furthermore, for any $\lambda\in(\lambda^{\ast}, h(\varepsilon^{\ast} ))$ there exists a unique
$\varepsilon \in (0, \varepsilon^{\ast} )$ such that 
$h(\varepsilon )= I(u^{\ast}-\varepsilon \varphi^{\ast})=\lambda$. From Lemma
\ref{lemma: I risultato di monotonicita' di beta lambda}, then we get
$\beta_\lambda < \beta_{\lambda^{\ast}}$.

Therefore, setting $\ell(\lambda^\ast) := h(\varepsilon^{\ast})$, we obtain the first part of Proposition
\ref{prop: teorema strumentale per il teorema di monotonicita'}.

It remains to show the estimate on the length of this interval $(\lambda^{\ast}, \ell(\lambda^\ast))$
and the relations between it and $(\lambda,\ell(\lambda))$, for $\lambda>\lambda^{\ast}$.
This will be done in Lemma \ref{lemma: lunghezza dell'intervallo U lambda star e varie}.

\begin{proof}[Proof of Lemma \ref{lemma: proprieta' di h}]
Recall that $h(0)=\lambda^{\ast}$. Compute the first derivative of $h$, using 
(\ref{eqn: differenziale di I}):
\[
h'(\varepsilon) = DI(u^{\ast}-\varepsilon \varphi^{\ast})[- \varphi^{\ast}]=
2\,\frac{\int_M f_{I(u^{\ast}-\varepsilon \varphi^{\ast}   )}
 \varphi^{\ast} \,e^{2u^{\ast}-2\varepsilon \varphi^{\ast}} d\mu_{g_0}}
 {\int_M e^{2u^{\ast}-2\varepsilon \varphi^{\ast}} d\mu_{g_0}} .
\]
Thus
\[
h'(0) = 2\,\frac{\int_M f_{\lambda^{\ast}}
 \varphi^{\ast} \,e^{2u^{\ast}} d\mu_{g_0}}
 {\int_M e^{2u^{\ast}} d\mu_{g_0}} >0
\]
in view of (\ref{eqn: l'integrale che coinvolge phi star e' positivo}).
By continuity of $h'$, there exists $\varepsilon\in (0,\varepsilon^{\ast} ]$ such that 
$h'>0$ on $[0,\varepsilon )$ and such that $\varepsilon$ is maximal with this property. We claim that
$\varepsilon=\varepsilon^{\ast}$. Suppose by contradiction that $\varepsilon<\varepsilon^{\ast}$. Note
that $h(\varepsilon ) = I(u^{\ast}-\varepsilon \varphi^{\ast})>\lambda^{\ast}=h(0)$, since $h'>0$ on
$[0,\varepsilon )$. Moreover,
\[
\begin{split}
\int_M f_{h(\varepsilon )}\,
 \varphi^{\ast} \,e^{2u^{\ast}-2\varepsilon \varphi^{\ast}} d\mu_{g_0} & = 
 \int_{B_{\frac{\sqrt{\lambda^\ast}}{L}}(0) } f_{h(\varepsilon)}\,
 \varphi^{\ast} \,e^{2u^{\ast}-2\varepsilon \varphi^{\ast}} e^{2v_0}dx\\
 &  \geq \frac{h(\varepsilon)}{2} \,
 \int_{B_{\frac{\sqrt{\lambda^\ast}}{L}}(0)} 
 \varphi^{\ast}  \,e^{(2u^{\ast}-2\varepsilon \varphi^{\ast}) } 
 e^{2v_0}dx
\end{split}
\]
where in the last inequality we used the fact that 
\[
f_{h(\varepsilon )}\geq \frac{h(\varepsilon)}{2} \;\;
\mbox{on} \;\; B_{\frac{\sqrt{h(\varepsilon)}}{L}}(0) \supset  B_{\frac{\sqrt{\lambda^{\ast}}}{L}}(0) 
\]
(recall Lemma \ref{lemma: determinazione della costante L}). Therefore, we obtain
\[
\begin{split}
\int_M f_{h(\varepsilon )}\,
 \varphi^{\ast} \,e^{2u^{\ast}-2\varepsilon \varphi^{\ast}} d\mu_{g_0} & \geq 
 \frac{h(\varepsilon )}{2} \log(1/\lambda^{\ast}) \,
 \int_{B_{\frac{(\lambda^\ast)^{3/2}}{L}}(0)} e^{(2u^{\ast}-2\varepsilon \varphi^{\ast}) } 
 e^{2v_0}dx \\
 & > 
 \frac{\lambda^{\ast}}{2} \log(1/\lambda^{\ast}) \,
 \int_{B_{\frac{(\lambda^\ast)^{3/2}}{L}}(0)} e^{(2u^{\ast}-2\varepsilon^{\ast} \varphi^{\ast}) } 
 e^{2v_0}dx \\
 & =
 \frac{(\lambda^{\ast})^{1+2\epsilon^{\ast} }}{2} \log(1/\lambda^{\ast}) \,
 \int_{B_{\frac{(\lambda^\ast)^{3/2}}{L}}(0)} e^{2u^{\ast} } 
 e^{2v_0}dx \\
 & \geq
 \frac{m_0\pi}{2L^2} \log(1/\lambda^{\ast}) (\lambda^\ast)^{4 +2\varepsilon^\ast} a(\lambda^\ast) \;>0
\end{split}
\]  
where in the last line we used (\ref{eqn: stima dell'integrale di lambda star alla 3/2}).
Thus, we have, since $\varepsilon>0$ and $\varphi^{\ast}\geq 0 $, 
\begin{eqnarray}\label{eqn: prima stima della derivata di h}
h'(\varepsilon ) & > & \frac{m_0\pi}{L^2} \log(1/\lambda^{\ast})(\lambda^{\ast})^{4+2\epsilon^{\ast} } 
\frac{a(\lambda^\ast)}{\int_M e^{2u^{\ast}-2\varepsilon \varphi^{\ast}} d\mu_{g_0}} \nonumber \\
& > & 
\frac{m_0\pi}{L^2}\log(1/\lambda^{\ast}) (\lambda^{\ast})^{4+2\epsilon^{\ast} } 
\frac{a(\lambda^\ast)}{\int_M e^{2u^{\ast}} d\mu_{g_0}} > 0,
\end{eqnarray}
contradicting the maximality of $\varepsilon$. 
Furthermore, reasoning as we have just done, we see that the bound (\ref{eqn: prima stima della derivata di h})
holds uniformly on $(0,\varepsilon^{\ast})$. We deduce $h'(\varepsilon^{\ast}) >0$ and the Lemma is proved.
\end{proof}
\begin{lemma}\label{lemma: lunghezza dell'intervallo U lambda star e varie}
Let $\lambda_0$ be defined as in the proof of Proposition 
\ref{prop: teorema strumentale per il teorema di monotonicita'}.
Fix $0<\lambda^\ast < \lambda < \lambda_0$ and consider $\ell(\lambda)$ given by the first part of Proposition \ref{prop: teorema strumentale per il teorema di monotonicita'}. Then
$$
\ell(\lambda) - \lambda \geq \tau = \tau(\lambda^\ast) >0 
$$
where $\tau$ is a constant not depending on $\lambda$.
\end{lemma}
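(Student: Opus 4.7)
The plan is to repeat the construction of Proposition \ref{prop: teorema strumentale per il teorema di monotonicita'} with the base point $\lambda$ in place of $\lambda^\ast$, and then bound every $\lambda$-dependent ingredient uniformly via the monotonicity of $a$, $A$ and Proposition \ref{prop: stime L infinito uniformi}. For each $\lambda\in(\lambda^\ast,\lambda_0)$, form the comparison function $\varphi_\lambda:=\varphi(\lambda)$ from (\ref{eqn: definizione della funzione comparison}), the parameter $\varepsilon^\ast_\lambda$ given by (\ref{eqn: definzione di epsilon star}) with $u^\ast,\varphi^\ast$ replaced by $u_\lambda,\varphi_\lambda$, and the function $h_\lambda(\varepsilon):=I(u_\lambda-\varepsilon\varphi_\lambda)$. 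Since $h_\lambda(0)=I(u_\lambda)=\lambda$ and $\ell(\lambda)=h_\lambda(\varepsilon^\ast_\lambda)$, the fundamental theorem of calculus gives
\[
\ell(\lambda)-\lambda \;=\; \int_0^{\varepsilon^\ast_\lambda} h_\lambda'(s)\,ds,
\]
so the lemma reduces to uniform lower bounds for both $\varepsilon^\ast_\lambda$ and the integrand $h_\lambda'$.

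For the length of the interval, inequality (\ref{eqn: bounds for epsilon star}) applied at $\lambda$ gives $\varepsilon^\ast_\lambda > \frac{m_0}{2M_0 L^2}\lambda^4 a(\lambda)$; combined with $\lambda>\lambda^\ast$ and the monotonicity of $a$ in its argument, this yields $\varepsilon^\ast_\lambda \geq \frac{m_0}{2M_0 L^2}(\lambda^\ast)^4 a(\lambda^\ast) =: \varepsilon_0(\lambda^\ast) > 0$, a bound that does not depend on $\lambda$.

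For the integrand, the derivation leading to (\ref{eqn: prima stima della derivata di h}), transposed to base point $\lambda$, yields for every $s\in(0,\varepsilon^\ast_\lambda)$
\[
h_\lambda'(s) \;>\; \frac{m_0\pi}{L^2}\,\log(1/\lambda)\,\lambda^{4+2\varepsilon^\ast_\lambda}\,\frac{a(\lambda)}{\int_M e^{2u_\lambda}\,d\mu_{g_0}}.
\]
I replace each $\lambda$-dependent factor by a quantity involving only $\lambda^\ast$: the bound $\lambda<\lambda_0<1$ yields $\log(1/\lambda)\geq\log(1/\lambda_0)>0$; the upper bound $\varepsilon^\ast_\lambda<6M_0/m_0$ from (\ref{eqn: bounds for epsilon star}) together with $\lambda^\ast<1$ and $\lambda>\lambda^\ast$ gives $\lambda^{4+2\varepsilon^\ast_\lambda}\geq(\lambda^\ast)^{4+12M_0/m_0}$; and the monotonicity of $a$, $A$ produces $a(\lambda)\geq a(\lambda^\ast)$ and $\int_M e^{2u_\lambda}\,d\mu_{g_0}\leq A(\lambda^\ast)$. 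Combining these estimates yields a positive constant $C(\lambda^\ast)$ such that $h_\lambda'(s)\geq C(\lambda^\ast)$ uniformly for $s\in(0,\varepsilon^\ast_\lambda)$ and $\lambda\in(\lambda^\ast,\lambda_0)$.

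Setting $\tau(\lambda^\ast):=\varepsilon_0(\lambda^\ast)\cdot C(\lambda^\ast)>0$ and integrating gives $\ell(\lambda)-\lambda\geq\tau(\lambda^\ast)$. The main technical obstacle is the third step: one must check that every factor in the pointwise lower bound on $h_\lambda'$ can be dominated from below (respectively above, for the denominator) by a quantity depending only on $\lambda^\ast$. This is where the monotonicity properties of $a$ and $A$, together with the uniform $L^\infty$-bounds of Proposition \ref{prop: stime L infinito uniformi}, play their essential role; once this bookkeeping is in place the conclusion follows by direct integration.
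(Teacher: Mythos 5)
Your proposal is correct and follows essentially the same route as the paper: both transpose the derivative estimate (\ref{eqn: prima stima della derivata di h}) and the bounds (\ref{eqn: bounds for epsilon star}) to the base point $\lambda$, and then use the monotonicity of $a$ and $A$ to replace every $\lambda$-dependent factor by its $\lambda^\ast$ counterpart. The only cosmetic difference is that the paper first packages the lower bound for $\ell(\lambda^\ast)-\lambda^\ast$ into a single quantity that is non-decreasing in its argument and then evaluates it at $\lambda$, whereas you perform the factor-by-factor comparison directly inside the integral.
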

\begin{proof}[Proof of Lemma \ref{lemma: lunghezza dell'intervallo U lambda star e varie}]
Let's begin with estimating $\ell(\lambda^\ast) - \lambda^{\ast}$. We restart from 
(\ref{eqn: prima stima della derivata di h}), which holds for $\varepsilon\in(0,\varepsilon^{\ast})$.
By equation (\ref{eqn: bounds for epsilon star}) and by the fact that $\lambda^{\ast} < 1$, we get
$(\lambda^{\ast})^{4+2\epsilon^{\ast} } > (\lambda^{\ast})^{4+ \frac{12M_0}{m_0} }$ and
$\log(1/\lambda^{\ast})> \log(1/\lambda_0) $. Recalling the definition of the auxiliary function
$A$ (equation (\ref{eqn: prima funzione ausiliaria})), we can bound
\[
A(\lambda^{\ast}) \geq \int_M e^{2u^{\ast}} d\mu_{g_0} 
\]
and obtain
\[
h'(\varepsilon) >\frac{ m_0\pi}{L^2} \log(1/\lambda_0) (\lambda^{\ast})^{4+ \frac{12M_0}{m_0}} 
\frac{a(\lambda^{\ast})}{A(\lambda^{\ast})} .
\]
Recalling once more (\ref{eqn: bounds for epsilon star}), with the constant
$k_0:=\frac{m_0^2\pi}{2M_0L^4}\log(1/\lambda_0)>0 $ we may finally estimate 
\begin{eqnarray*}
\ell(\lambda^\ast) -\lambda^{\ast} = h(\varepsilon^{\ast})-\lambda^{\ast} & = & 
\int_0^{\varepsilon^{\ast}} h'(\varepsilon)d\varepsilon \\
&> & \varepsilon^\ast \frac{ m_0\pi}{L^2} \log(1/\lambda_0) (\lambda^{\ast})^{4+ \frac{12M_0}{m_0}} 
\frac{a(\lambda^{\ast})}{A(\lambda^{\ast})} \\
&>&
k_0 (\lambda^{\ast})^{8+ \frac{12M_0}{m_0}}
\frac{(a(\lambda^{\ast}))^2}{A(\lambda^{\ast})} ,
\end{eqnarray*}
and the function $(\lambda^{\ast})^{8+ \frac{12M_0}{m_0}}\frac{(a(\lambda^{\ast}))^2}{A(\lambda^{\ast})}$
is not decreasing in $\lambda^{\ast}$. 

Hence, taking $\lambda\in (\lambda^{\ast}, \lambda_0)$, we deduce
\[
\ell(\lambda) - \lambda
> k_0 (\lambda)^{8+ \frac{12M_0}{m_0}}\frac{(a(\lambda))^2}{A(\lambda)}
\geq k_0 (\lambda^{\ast})^{8+ \frac{12M_0}{m_0}}\frac{(a(\lambda^{\ast}))^2}{A(\lambda^{\ast})}
:= \tau>0 .
\]
The Lemma is proved.
\end{proof}
This concludes the proof of Proposition \ref{prop: teorema strumentale per il teorema di monotonicita'}.
\end{proof}
\subsection{A bound for the total curvature}\mbox{}

With the help of Corollary \ref{cor: monotonicita' di beta lambda},
Proposition \ref{prop: construction of the comparison function} and following \cite{Struwe93}, 
it is now quite straightforward to show the following estimate for the derivative of $\beta_\lambda$:
\begin{lemma}\label{lemma: stima per la derivata di beta lambda}
There exists a sequence $(\lambda_n)_n\subset (0,\lambda_0)$ of points of differentiability for $\beta_\lambda$, such
that $\lambda_n \downarrow 0$ as $n\rightarrow \infty$ and 
\[
|\beta'_{\lambda_n} | \leq C_0/\lambda_n
\]
where $C_0$ is a positive constant.
\end{lemma}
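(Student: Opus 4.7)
The plan is to exploit the monotonicity established in Corollary \ref{cor: monotonicita' di beta lambda} together with the logarithmic upper bound from Proposition \ref{prop: construction of the comparison function} through a contradiction argument in the spirit of \cite{Struwe93}. Since $\beta_\lambda$ is monotone decreasing on $(0,\lambda_0)$, by Lebesgue's theorem it is differentiable almost everywhere, $\beta'_\lambda\leq 0$ at every differentiability point, and for $0<a<b<\lambda_0$,
\[
\int_a^b |\beta'_s|\,ds \leq \beta_a - \beta_b.
\]
Furthermore, taking $\sigma=1$ in Proposition \ref{prop: construction of the comparison function} gives the a priori bound $\beta_\lambda \leq 18\pi M_0\,\log(1/\lambda)$ for every $\lambda\in(0,\lambda_\sigma)$. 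After possibly shrinking $\lambda_0$ I may assume this bound holds throughout $(0,\lambda_0)$.

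Suppose, for contradiction, that the conclusion of the lemma fails for the constant $C_0:=19\pi M_0$ (any constant strictly larger than $18\pi M_0$ would do). Then there is some $\lambda^\ast\in(0,\lambda_0)$ such that $|\beta'_s|>C_0/s$ at every differentiability point $s\in(0,\lambda^\ast)$, for otherwise one could extract a sequence $\lambda_n\downarrow 0$ of differentiability points with $|\beta'_{\lambda_n}|\leq C_0/\lambda_n$. Integrating the pointwise lower bound on the interval $[a,\lambda^\ast]$ for arbitrary $0<a<\lambda^\ast$ and using the Lebesgue-type inequality above yields
\[
\beta_a - \beta_{\lambda^\ast} \geq \int_a^{\lambda^\ast} |\beta'_s|\,ds \;>\; C_0 \log(\lambda^\ast/a).
\]
Combining this with the upper bound $\beta_a\leq 18\pi M_0\log(1/a)$, I obtain
\[
18\pi M_0\,\log(1/a) \;>\; C_0\,\log(1/a) + C_0\log\lambda^\ast + \beta_{\lambda^\ast},
\]
that is, $(18\pi M_0 - C_0)\log(1/a) > C_0\log\lambda^\ast + \beta_{\lambda^\ast}$. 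Since $C_0>18\pi M_0$, the left-hand side tends to $-\infty$ as $a\downarrow 0$ whereas the right-hand side is a fixed constant, which is absurd. Hence the set of differentiability points $s$ with $|\beta'_s|\leq C_0/s$ must accumulate at $0$, and the required sequence $(\lambda_n)_n$ can be extracted.

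The argument is essentially a pigeonhole comparison of a pointwise lower bound with an averaged upper bound, so I do not expect a serious obstacle. The only subtlety is that $\beta_\lambda$ is a priori only of bounded variation rather than absolutely continuous, which is why I use the one-sided inequality $\int_a^b|\beta'_s|\,ds\leq \beta_a-\beta_b$ (valid for any monotone function) instead of the fundamental theorem of calculus. It is also essential that the constant $18\pi M_0$ in the upper bound on $\beta_\lambda$ does not depend on $\lambda$; this is precisely what Proposition \ref{prop: construction of the comparison function} provides, and it is what allows the contradiction to survive the limit $a\downarrow 0$.
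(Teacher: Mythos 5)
Your argument is correct and is essentially the same as the paper's: both rely on the monotonicity of $\beta_\lambda$ (hence a.e.\ differentiability and the one-sided Lebesgue inequality $\int_a^b|\beta'_s|\,ds\le\beta_a-\beta_b$), the logarithmic upper bound $\beta_\lambda\le C\log(1/\lambda)$ from Proposition \ref{prop: construction of the comparison function}, and a contradiction obtained by integrating the assumed pointwise lower bound $|\beta'_s|>C_0/s$ with $C_0$ strictly larger than the constant in the upper bound. Your remark about $\beta_\lambda$ being only of bounded variation is a welcome precision, but the route is identical.
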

\begin{proof}
By Proposition \ref{prop: construction of the comparison function}, we have
$\beta_\lambda \leq C \log(1/\lambda)$, for any $\lambda<\lambda_0$. Set $C_0:=C+1$ and assume that
exists $\tilde{\lambda}< \lambda_0$ such that for any $\lambda<\tilde{\lambda}$, $\lambda$ point
of differentiability of $\beta_\lambda$, there holds:
\[
|\beta'_{\lambda}| > C_0/\lambda \, .
\]
Then we obtain, by Lebesgue's Theorem, that
\[
\beta_\lambda - \beta_{\tilde{\lambda}} \geq \int^{\tilde{\lambda}}_\lambda |\beta'_{s}|ds
\]
and hence 
\[
C\log(1/\lambda)\geq \beta_\lambda > \beta_{\tilde{\lambda}} + C_0 \log(\tilde{\lambda}/\lambda) .
\]
Thus, we get
\[
\beta_{\tilde{\lambda}} + C_0 \log(\tilde{\lambda}) -\log(\lambda) \leq 0 ,
\]
which, for $\lambda$ small enough, is clearly impossible. The Lemma is proved.
\end{proof}

We can now prove the analogue of equation (5.1) in \cite{Borer-Galimberti-Struwe}: 
\begin{proposition}\label{prop: bound for lambda x volume}
Let $(\lambda_n)_n$ be a sequence like the one given by Lemma \ref{lemma: stima per la derivata di beta lambda}.
and set $u_n:= u_{\lambda_n}$. Then
\begin{equation}\label{eqn: bound for lambda x volume}
  \limsup_n \left( \lambda_n \int_M e^{2u_n} d\mu_{g_0} \right) < \infty
\end{equation}
\end{proposition}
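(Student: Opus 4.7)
The plan is to prove that at every point $\lambda\in(0,\lambda_0)$ where $\beta_\lambda$ is differentiable one has
\[
 \beta'_\lambda \;\leq\; -\int_M e^{2u_\lambda}\,d\mu_{g_0}.
\]
Applied at $\lambda=\lambda_n$ and combined with the estimate $|\beta'_{\lambda_n}|\leq C_0/\lambda_n$ from Lemma \ref{lemma: stima per la derivata di beta lambda}, this would immediately yield
\[
 \int_M e^{2u_n}\,d\mu_{g_0} \;\leq\; -\beta'_{\lambda_n} \;\leq\; |\beta'_{\lambda_n}| \;\leq\; C_0/\lambda_n,
\]
which is exactly (\ref{eqn: bound for lambda x volume}).

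To derive the one-sided derivative estimate I would, for small $h>0$, construct a competitor $v_h\in\mathcal{E}_{\lambda+h}$ close to $u_\lambda$ and exploit the variational characterization $\beta_{\lambda+h}=\min_{\mathcal{E}_{\lambda+h}}E\leq E(v_h)$. Since $u_\lambda\in C^\infty(M)$ is non-constant, the equation $-\Delta_{g_0}u_\lambda=f_\lambda e^{2u_\lambda}$ tested against $\psi:=u_\lambda-\overline{u_\lambda}\in\Hone$ gives
\[
 A \;:=\; \int_M f_\lambda\,\psi\,e^{2u_\lambda}\,d\mu_{g_0} \;=\; \int_M|\nabla u_\lambda|^2_{g_0}\,d\mu_{g_0} \;=\; \beta_\lambda \;>\;0.
\]
I would set $v_h := u_\lambda+t(h)\psi$ and determine $t(h)$ by the scalar constraint
\[
 G(t,h)\;:=\;\int_M f_{\lambda+h}\,e^{2(u_\lambda+t\psi)}\,d\mu_{g_0}\;=\;0.
\]
Since $u_\lambda\in\mathcal{E}_\lambda$, we have $G(0,0)=0$, while $\partial_t G(0,0)=2A\neq 0$ and $\partial_h G(0,0)=\int_M e^{2u_\lambda}d\mu_{g_0}$; the implicit function theorem then yields a smooth branch with $t(0)=0$ and
\[
 t(h) \;=\; -\frac{h}{2A}\int_M e^{2u_\lambda}\,d\mu_{g_0}\;+\;O(h^2).
\]

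Expanding the Dirichlet energy and using $-\Delta_{g_0}u_\lambda=f_\lambda e^{2u_\lambda}$ to identify the cross term $\int_M(\nabla u_\lambda,\nabla\psi)_{g_0}d\mu_{g_0}=A$, I obtain
\[
 E(v_h)\;=\;E(u_\lambda)+2t(h)A+t(h)^2\,E(\psi)\;=\;\beta_\lambda\;-\;h\int_M e^{2u_\lambda}\,d\mu_{g_0}\;+\;O(h^2).
\]
Dividing $\beta_{\lambda+h}-\beta_\lambda\leq E(v_h)-\beta_\lambda$ by $h>0$ and letting $h\downarrow 0$ (using that $\beta'_\lambda$ exists by hypothesis) gives the claimed bound on $\beta'_\lambda$.

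The main obstacle is the careful justification of the implicit function theorem step and of the uniformity of the $O(h^2)$ remainders; these involve quantities like $\int_M \psi^k e^{2u_\lambda}d\mu_{g_0}$ and $E(\psi)$, all finite at any fixed $\lambda\in(0,\lambda_0)$ because $u_\lambda\in C^\infty(M)\cap L^\infty(M)$, the latter by smoothness of solutions and Proposition \ref{prop: stime L infinito uniformi}. Since we only need the differential estimate pointwise at each $\lambda_n$, the admissible size of $h$ may depend on $n$, which is harmless; no uniform control as $\lambda_n\downarrow 0$ is required for this step.
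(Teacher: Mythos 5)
Your proposal is correct, and the skeleton is the same as the paper's: compare $\beta_{\lambda+h}$ with the energy of a small perturbation of $u_\lambda$ lying in $\mathcal{E}_{\lambda+h}$, and combine the resulting one-sided derivative estimate with $|\beta'_{\lambda_n}|\le C_0/\lambda_n$ from Lemma \ref{lemma: stima per la derivata di beta lambda}. The implementation differs in the choice of perturbation direction and in the bookkeeping. The paper perturbs along $-\varphi^{\ast}$, the logarithmic comparison function already used in the monotonicity argument, and parametrizes the perturbed constraint via $h(\varepsilon)=I(u^{\ast}-\varepsilon\varphi^{\ast})$, passing to the limit in difference quotients $\frac{\beta_{\lambda_k}-\beta_{\lambda^{\ast}}}{\lambda_k-\lambda^{\ast}}$ with $\varepsilon_k=h^{-1}(\lambda_k)$; the key cancellation is that $\frac{2}{h'(0)}\int_M f_{\lambda^{\ast}}\varphi^{\ast}e^{2u^{\ast}}d\mu_{g_0}=\int_M e^{2u^{\ast}}d\mu_{g_0}$ by the explicit formula for $DI$. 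You instead perturb along $\psi=u_\lambda-\overline{u_\lambda}$, solve the scalar constraint by the implicit function theorem, and package the conclusion as the clean differential inequality $\beta'_\lambda\le-\int_M e^{2u_\lambda}d\mu_{g_0}$. Both work for the same structural reason: for any direction $\phi$, the Euler--Lagrange equation identifies the cross term $2\int_M(\nabla u_\lambda,\nabla\phi)_{g_0}d\mu_{g_0}$ with $2\int_M f_\lambda\phi\,e^{2u_\lambda}d\mu_{g_0}$, and the ratio of this to the first variation of the constraint in $\lambda$ is always $-\int_M e^{2u_\lambda}d\mu_{g_0}$, independently of $\phi$. Your choice is arguably cleaner and self-contained (it does not need $\varphi^{\ast}$, the map $I$, or Lemma \ref{lemma: proprieta' di h} at all; $\partial_tG(0,0)=2\beta_\lambda>0$ is automatic since $u_\lambda$ is non-constant), at the price of invoking the implicit function theorem where the paper can reuse objects it has already built. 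All the quantities you need are finite at fixed $\lambda$ by smoothness of $u_\lambda$, and, as you note, only a pointwise estimate at each $\lambda_n$ is required, so the $n$-dependence of the admissible range of $h$ is harmless.
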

\begin{proof}
Fix $n\in\N$ and set for convenience $\lambda^{\ast}:=\lambda_n
\in(0,\lambda_0)$ and $u^{\ast}:=u_n$.

Consider the function $h$ defined by equation (\ref{eqn: funzione ausiliaria h}), where
$\varepsilon^{\ast}$ and $\varphi^{\ast}$ are defined as in the proof of Proposition
\ref{prop: teorema strumentale per il teorema di monotonicita'}.
For $\lambda_k \downarrow \lambda^{\ast}$, $\lambda_k<h(\varepsilon^{\ast})$, set
$\varepsilon_k := h^{-1}(\lambda_k)$. By Lemma \ref{lemma: proprieta' di h}, 
$\varepsilon_k \to 0$ as $k\to \infty$. Finally, by Lemma \ref{lemma: stima per la derivata di beta lambda},
we may assume that for all $k$
\[
-\frac{\beta_{\lambda^{\ast}}-\beta_{\lambda_k} }{\lambda^{\ast}-\lambda_k} \leq 2C_0/\lambda^{\ast} :=
C/\lambda^{\ast}
\]
where $C_0$ is the constant of Lemma \ref{lemma: stima per la derivata di beta lambda}.

Observe that $\beta_{\lambda^{\ast}}-\beta_{\lambda_k} \geq
E(u^{\ast})- E(u^{\ast}-\varepsilon_k\varphi^{\ast})$, since 
$u^{\ast}-\varepsilon_k\varphi^{\ast}\in\mathcal{E}_{I(u^{\ast}-\varepsilon_k\varphi^{\ast})}=
\mathcal{E}_{\lambda_k}$. Now:
\[
E(u^{\ast})- E(u^{\ast}-\varepsilon_k\varphi^{\ast})=
\int_M \left(-\varepsilon_k^2 |\nabla \varphi^{\ast}|_{g_0}^2 + 2\varepsilon_k(\nabla u^{\ast},\nabla 
\varphi^{\ast} )_{g_0}\right) d\mu_{g_0} .
\]
Hence,
\[
\frac{C}{\lambda^{\ast}} \geq \frac{1}{\lambda_k -\lambda^{\ast}} 
\int_M \left(-\varepsilon_k^2 |\nabla \varphi^{\ast}|_{g_0}^2 + 2\varepsilon_k(\nabla u^{\ast},\nabla 
\varphi^{\ast} )_{g_0} \right)d\mu_{g_0} .
\]
Recalling that $\varepsilon_k = h^{-1}(\lambda_k)$, $h^{-1}(\lambda^{\ast})=0$ and using
(\ref{eqn:estimates for the Dirichlet energy of the comparison function}), we have
\begin{eqnarray*}
\frac{1}{\lambda_k -\lambda^{\ast}}\int_M \varepsilon_k^2 |\nabla \varphi^{\ast}|_{g_0}^2
&\leq & 2\pi M_0\log(1/\lambda^{\ast}) \frac{h^{-1}(\lambda_k)- h^{-1}(\lambda^{\ast})}{\lambda_k 
-\lambda^{\ast}} \varepsilon_k \\
& \to & 0
\end{eqnarray*}
as $k\rightarrow \infty$, since $h^{-1}$ is differentiable at $\lambda^{\ast}$ and $\varepsilon_k$ goes to
zero. Therefore, we may write, with an error term $o(1)$ as $k\rightarrow \infty$, that
\[
\frac{C}{\lambda^{\ast}} \geq 
\frac{2\varepsilon_k}{\lambda_k -\lambda^{\ast}} \int_M (\nabla u^{\ast},\nabla 
\varphi^{\ast} )_{g_0} d\mu_{g_0} + o(1) .
\] 
Thus, when $k\to\infty$, we obtain
\begin{eqnarray*}
\frac{C}{\lambda^{\ast}} &\geq & 
2(h^{-1})'(\lambda^{\ast}) \int_M (\nabla u^{\ast},\nabla 
\varphi^{\ast} )_{g_0} d\mu_{g_0}\\
&=& \frac{2}{h'(0)} \int_M f_{\lambda^{\ast}}
 \varphi^{\ast} \,e^{2u^{\ast}} d\mu_{g_0} \\
&=& \int_M e^{2u^\ast} d\mu_0 
\end{eqnarray*}
where in the last line we have used the explicit expression of $h'(0)$. Going back to the original
notation, we have for any $n\in\N$
\[
\int_M e^{2u_n} d\mu_{g_0} \leq C/\lambda_n ,
\]
which is nothing but equation (\ref{eqn: bound for lambda x volume}). The Proposition is proved. 
\end{proof}

As a consequence of Proposition \ref{prop: bound for lambda x volume} and the Gauss-Bonnet identity
$0= \int_M f_{\lambda_n} e^{2u_n} d\mu_{g_0}$, we deduce the uniform bound
\[
\sup_{n\in\N} \int_M (|f_0|+\lambda_n)e^{2u_n} d\mu_{g_0} < \infty
\] 
for the total curvature of $g_n=e^{2u_n}g_0$.

\subsection{Blow-up analysis}\mbox{}

In this subsection we complete the Proof of Theorem \ref{thm: first main result}. For the rest of this part,
let $(\lambda_n)_n$ be a sequence like the one given by Lemma \ref{lemma: stima per la derivata di beta lambda} and set $u_n:=u_{\lambda_n}$. We follow closely Section 5 of \cite{Borer-Galimberti-Struwe}.

As shown by Ding-Liu \cite{Ding-Liu95}, we obtain for any open domain 
$\Omega\subset\subset M^-:=\left\{p\in M: f_0(p)< 0 \right\}$,
$\int_\Omega (|\nabla u_n^+|^2_{g_0} + |u_n^+|^2) d\mu_{g_0} \leq C(\Omega)$, where $t^+ =\max
\left\{t,0 \right\}$, $t\in\R$,
and hence, as proved in \cite{Borer-Galimberti-Struwe}, that
\begin{equation}\label{eqn:local pointwise upper bound for the solutions}
u_n \leq C'(\Omega) .
\end{equation}
Thus, if a sequence $(u_n)_n$ blows up near a point $p_0\in M$ in the sense that for every
$r>0$ there holds $\sup_{B_r(p_0)} u_n\to +\infty$ (and we know that it is always the case
in view of Remark \ref{rmk: i massimi delle soluzioni tendono ad esplodere}),
necessarily $f_0(p_0)=0$. Moreover, there exists a sequence of points $p_n\to p_0$ such that
for some $r>0$, $u_n(p_n)=\sup_{B_r(p_0)}u_n$. 

Let $p_0$ be such a blow-up point for a sequence of solutions $u_n$. We introduce local isothermal coordinates $x$ on $B_r(p_0)$ around $p_0=0$. We can write $g_0 = e^{2v_0}g_{\R^2}$ for some smooth function $v_0$. Setting
$v_n:= u_n + v_0$, we get
\[
-\Delta v_n = (f_0(x) + \lambda_n)e^{2v_n} \;\;\; \mbox{on} \;\;\; B_R(0)
\]
for some $R>0$ and there is a sequence $x_n\to 0$ so that
\[
v_n(x_n)=\sup_{|x|\leq R}v_n(x)\to +\infty
\]
as $n\to +\infty$. Moreover, $\Delta v_n(x_n)\leq 0$ and thus $f_0(x_n)+\lambda_n\geq 0$, which leads to
\[
|x_n|^2 \leq C \lambda_n
\]
for some constant $C>0$.

We observe that in the present case we do not have available a uniform global lower bound for the
sequence of solutions $u_n$ (and hence for $v_n$) of the kind present in \cite{Borer-Galimberti-Struwe}. 
But we can still show that the analogue of Lemma 5.2 \cite{Borer-Galimberti-Struwe} holds
true. Indeed, a careful inspection shows that a uniform lower bound is not needed in the proof of
Lemma 5.2 \cite{Borer-Galimberti-Struwe}.
\begin{lemma}
For every $r>0$, that holds 
\[
\limsup_n \int_{B_r(0)} (f_0+\lambda_n)^+ e^{2v_n} dx \geq 2\pi \, .
\]
\end{lemma}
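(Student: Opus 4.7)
The plan is a contradiction argument based on the Brezis-Merle theorem. Suppose to the contrary that for some $r>0$ one has $\limsup_n \int_{B_r(0)} (f_0+\lambda_n)^+ e^{2v_n}\, dx < 2\pi$. Passing to a subsequence, pick $\delta>0$ with $\int_{B_r(0)} (f_0+\lambda_n)^+ e^{2v_n}\, dx \leq 2\pi - \delta$ for all $n$, and shrink $r$ if necessary so that, in the chosen conformal coordinates, $0$ is the unique maximum of $f_0$ on $\overline{B_r(0)}$; this guarantees $\partial B_r \subset M^-$, and hence by the Ding-Liu estimate (\ref{eqn:local pointwise upper bound for the solutions}) (combined with the bound on $v_0$) we obtain $v_n \leq C$ uniformly on $\partial B_r$. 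The goal is then to promote this boundary bound to a uniform upper bound on all of $B_r$, contradicting $v_n(x_n)\to +\infty$ for $x_n\to 0$.

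The key step is to split the source by sign: with $V_n := f_0 + \lambda_n$, write $-\Delta v_n = V_n^+ e^{2v_n} - V_n^- e^{2v_n}$ and decompose $v_n = v_n^1 + v_n^2$ on $B_r(0)$, where $v_n^1$ is the solution of the Poisson problem $-\Delta v_n^1 = V_n^+ e^{2v_n}$ with $v_n^1 = 0$ on $\partial B_r$. The correction $v_n^2$ then satisfies $-\Delta v_n^2 = -V_n^- e^{2v_n} \leq 0$ together with $v_n^2 = v_n \leq C$ on $\partial B_r$, so by the maximum principle $v_n^2 \leq C$ in $B_r$. For $v_n^1$, the Brezis-Merle inequality applied with $\|V_n^+ e^{2v_n}\|_{L^1(B_r)} \leq 2\pi - \delta < 2\pi$ yields a uniform bound $\int_{B_r} e^{\alpha|v_n^1|}\, dx \leq C$ for some $\alpha > 2$ depending only on $\delta$ (taking e.g.\ the ratio $(4\pi - \delta)/(2\pi - \delta) > 2$). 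In particular, $e^{2v_n^1}$ is bounded in $L^{1+\varepsilon}(B_r)$ for some $\varepsilon>0$, so that $V_n^+ e^{2v_n} = V_n^+ e^{2v_n^1} e^{2v_n^2}$ is bounded in the same $L^{1+\varepsilon}$, using $\|V_n^+\|_\infty \leq C$ and $v_n^2 \leq C$. Standard $L^p$ elliptic estimates together with the 2D Sobolev embedding $W^{2,1+\varepsilon}\hookrightarrow L^\infty$ then give $\|v_n^1\|_{L^\infty(B_r)} \leq C$, whence $v_n \leq C$ uniformly on $B_r$, the desired contradiction.

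The main obstacle is really just the invocation of Brezis-Merle and the verification that the sharp threshold $2\pi$ appears (this follows from the factor $2$ in $e^{2v_n}$ versus $e^u$ in the classical $-\Delta u = Ve^u$ formulation). The subtle point, and the reason the argument carries over to the torus setting, is that only the \emph{upper} bound $v_n^2 \leq C$ is needed in $B_r$, obtained purely from subharmonicity and the boundary datum inherited from (\ref{eqn:local pointwise upper bound for the solutions}). No global lower bound on $v_n$ enters, so the possible drift $v_n \to -\infty$ on parts of $M$ permitted by case i) of Theorem \ref{thm: first main result} does not obstruct the proof — this is precisely the "careful inspection" alluded to in the excerpt.
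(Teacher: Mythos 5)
Your proof is correct and is essentially the argument the paper intends: it is the standard Brezis--Merle contradiction scheme of Lemma~5.2 in \cite{Borer-Galimberti-Struwe} (sign-splitting of the source, maximum principle for the subharmonic part, Brezis--Merle below the threshold $2\pi$ for the positive part, then $L^{1+\varepsilon}$ bootstrapping to an interior upper bound contradicting $v_n(x_n)\to+\infty$). You also correctly pinpoint the ``careful inspection'' the paper alludes to, namely that only the boundary upper bound from (\ref{eqn:local pointwise upper bound for the solutions}) enters and no global lower bound on $v_n$ is required.
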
 

In order to prove Theorem \ref{thm: first main result}, as regards part ii),
we would like to imitate the proof of Theorem 1.4 \cite{Borer-Galimberti-Struwe}. To do that and to show the
 convergence results therein, the last ingredient we need is at least a local lower bound for our sequence 
 of solutions $u_n$. 
 
The next Lemma shows that either the sequence degenerates or that we have a local lower bound. 
After this Lemma, we will obtain part i) of Theorem \ref{thm: first main result}. To prove part ii), it will 
be sufficient to repeat the same reasoning as after Lemma 5.2. in \cite{Borer-Galimberti-Struwe}. 

\begin{lemma}\label{lemma: local lower bound for the sequence of solutions or convergence to minus inf}
Let $(\lambda_n)_n$ and $(u_n)_n$ be defined as above and set
\[
M_\infty:= M \setminus  \left\{p_{\infty}^{(1)}, \cdots ,p_{\infty}^{(I)} \right\} .
\] 
where $p_{\infty}^{(1)}, \cdots ,p_{\infty}^{(I)}$ are blow-up points. Then, up to subsequences, either\\
i) $u_n \to -\infty$ locally uniformly on compact domains of $M_\infty$, or\\
ii) for any compact domain $\Omega\subset\subset M_\infty$, there exists a constant $C=C(\Omega)\in\R$ such that
\[
u_n\big |_{\Omega} > C(\Omega)
\]
uniformly in $n$.
\end{lemma}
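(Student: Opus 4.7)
The strategy is to prove the a priori estimate
\[
|u_n(x) - \overline{u}_n| \leq C(\Omega) \quad \text{for every compact } \Omega \subset\subset M_\infty,
\]
where $\overline{u}_n := \int_M u_n \, d\mu_{g_0}$, and then read off the dichotomy from the behaviour of the scalar sequence $\overline{u}_n$. Alternative (i) will correspond to $\overline{u}_n \to -\infty$ along a subsequence, and alternative (ii) to the remaining case.

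First I would collect the bounds that will feed into the estimate. By definition of blow-up point, for every $p \in M_\infty$ there exists a small ball around $p$ on which $\sup u_n$ stays bounded; combined with the Ding--Liu pointwise upper bound (\ref{eqn:local pointwise upper bound for the solutions}) valid on $M^-$, a finite covering yields a uniform upper bound $u_n \leq C(\Omega)$ on each compact $\Omega \subset\subset M_\infty$. On the other hand, the total curvature estimate recorded after Proposition~\ref{prop: bound for lambda x volume} gives $\sup_n \int_M (|f_0| + \lambda_n) e^{2u_n}\, d\mu_{g_0} < \infty$.

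Next, let $G(x,y)$ be the Green's function of $-\Delta_{g_0}$ on $(M, g_0)$, normalized so that $-\Delta_y G(x, \cdot) = \delta_x - 1$ (recall $\vol(M;g_0) = 1$). The equation $-\Delta_{g_0} u_n = f_{\lambda_n} e^{2u_n}$ yields the representation
\[
u_n(x) - \overline{u}_n \;=\; \int_M G(x,y)\, f_{\lambda_n}(y)\, e^{2u_n(y)} \, d\mu_{g_0}(y).
\]
Fix $\Omega \subset\subset M_\infty$ and choose small disjoint balls $B_\delta(p_\infty^{(i)})$, each at positive distance from $\Omega$. On the union of these balls, $G(x,\cdot)$ is uniformly bounded for $x \in \Omega$ (since $y$ stays at distance $\geq \delta$ from $x$), so the total curvature bound controls the integrand. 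On the complement of those balls, $u_n$ and $|f_{\lambda_n}|$ are uniformly bounded, while $G(x, \cdot)$ is in $L^1$ thanks to its integrable logarithmic singularity; the integral is therefore also bounded uniformly in $n$ and $x \in \Omega$. Summing these two contributions produces the desired estimate $|u_n(x) - \overline{u}_n| \leq C(\Omega)$.

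Finally, pass to a subsequence along which $\overline{u}_n \to \ell \in [-\infty, +\infty]$. If $\ell = -\infty$, the display above forces $u_n \to -\infty$ uniformly on every compact $\Omega \subset\subset M_\infty$, which is case (i). Otherwise $\overline{u}_n$ remains bounded below, and the same display yields the uniform lower bound $u_n \geq -C'(\Omega)$ on each compact $\Omega \subset\subset M_\infty$, which is case (ii). The main obstacle is controlling the Green's function integral near the blow-up points, where no pointwise control on $u_n$ is available; this is precisely where the total curvature bound obtained from Proposition~\ref{prop: bound for lambda x volume} is decisive, because it allows us to trade the pointwise blow-up of $e^{2u_n}$ for the harmless boundedness of $G(x, \cdot)$ away from $x$.
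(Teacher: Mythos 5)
Your proof is correct, but it follows a genuinely different route from the paper's. The paper argues purely locally: on a chart around each point of $\Omega$ it splits $v_n = u_n + v_0$ into a part $v_n^{(1)}$ solving a Dirichlet problem with uniformly bounded right-hand side (hence bounded in $W^{2,p}\hookrightarrow C^0$ by the local upper bound on $u_n$) and a harmonic part $v_n^{(0)}$ that is bounded above; Harnack's inequality applied to $C - v_n^{(0)} \geq 0$ yields the local dichotomy, which is then propagated over $\overline{\Omega}$ by connectedness. You instead work globally via the Green's function representation $u_n(x)-\overline{u}_n=\int_M G(x,y)f_{\lambda_n}(y)e^{2u_n(y)}\,d\mu_{g_0}(y)$, splitting the integral between small balls around the blow-up points (where $G(x,\cdot)$ is bounded for $x\in\Omega$ and the total curvature bound from Proposition~\ref{prop: bound for lambda x volume} controls the mass) and their complement (where the local upper bound controls $e^{2u_n}$ pointwise and $G(x,\cdot)\in L^1$ uniformly). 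Your approach buys a stronger, more quantitative conclusion --- a uniform oscillation bound $|u_n-\overline{u}_n|\le C(\Omega)$ on compacts of $M_\infty$, with the dichotomy read off from the single scalar sequence $\overline{u}_n$ --- at the price of invoking the global total curvature bound, which the paper's harmonic-decomposition argument does not need. Both proofs share the same implicit preliminary step (a uniform local upper bound on $u_n$ near the finitely many non-blow-up zeros of $f_0$, obtained after passing to a subsequence), so no gap arises there that is not also present in the paper.
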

\begin{proof}
We fix two open domains $\Omega\subset\subset\tilde{\Omega}\subset\subset M_\infty$. From 
(\ref{eqn:local pointwise upper bound for the solutions}),
for any $n$ we get that $u_n\big |_{\tilde{\Omega}} \leq C(\tilde{\Omega})$. 
We pick an arbitrary point $p\in \overline{\Omega}$ and $r_p>0$ so that 
$B_{r_p}(p)\subset\tilde{\Omega}$. If needed, we choose a smaller radius and we consider a conformal chart
$\Psi: B_{r_p}(p) \to B_1(0)\subset\R^2 $ with coordinates $x$ so that locally we have
$g_0= e^{2v_0}g_{\R^2}$ with $v_0\in C^{\infty}(\overline{B_1(0)})$. Setting
$v_n := u_n + v_0$, we obtain
\[
-\Delta v_n = (f_0(x)+\lambda_n)e^{2v_n} \;\;\; \mbox{on} \;\;\; B_1(0) .
\]
Split $v_n= v_n^{(0)}+v_n^{(1)}$, where $v_n^{(1)}\in H^1_0(B_1(0))$ solves the boundary value problem
\[
\left\{
\begin{array}{ll}
-\Delta v_n^{(1)} = (f_0(x)+\lambda_n) e^{2v_n} & \mbox{in} \;\; B_1(0), \\
v_n^{(1)} =  0 & \mbox{on} \;\; \partial B_1(0).
\end{array}
\right.
\]
and $v_n^{(0)}$ is harmonic. Hence it follows, uniformly in $n$,
\[
||\Delta v_n^{(1)}||_{L^{p}(B_1(0))}\leq||\Delta v_n^{(1)}||_{L^{\infty}(B_1(0))} \leq C
\] 
for any $p\geq 1$. Fixing $p>1$, from elliptic regularity theory we obtain that
$(v_n^{(1)})_n$ is bounded in $W^{2,p}(B_1(0))\hookrightarrow C^0({\overline{B_1(0)}})$. From the local upper bound on $\tilde{\Omega}$ for the sequence $(u_n)_n$ (and hence for $(v_n)_n$), we infer that for 
any $x\in \overline{B_1(0)}$,
\[
v_n^{(0)}(x) \leq ||v_n^{(1)}||_{L^{\infty}(B_1(0))} + C(\tilde{\Omega}) \leq C
\]
uniformly in $n$. Therefore, Harnack's inequality implies that 
\[
\sup_{B_{1/2}(0)} v_n^{(0)} \leq C_1\inf_{B_{1/2}(0)} v_n^{(0)} + C_2
\]
for suitable constants $C_1>0$ and $C_2\in\R$ depending on $B_{1/2}(0)$ but not on $n$.

We see that we have two mutually disjoint cases (up to subsequences):
\begin{enumerate}
  \item $\inf_{B_{1/2}(0)} v_n^{(0)}\to -\infty$, as $n\to +\infty$ 
  \item $\inf_{B_{1/2}(0)} v_n^{(0)} \geq -C$, uniformly in $n$.
\end{enumerate}
In the first case, it follows, recalling that $(v_n^{(1)})_n$ is bounded in $L^{\infty}(B_1(0))$, that
\[
v_n \to -\infty
\] 
uniformly in $\overline{B_{1/2}(0)}$.

In the second case, we deduce $C < v_n\big |_{\overline{B_{1/2}(0)}}$ uniformly in $n$.  

Since $\overline{\Omega} $ is connected, we conclude that either on $\overline{\Omega}$ the sequence of solutions $u_n$ goes uniformly to $-\infty$ or that there exists $C=C(\Omega)$ such that 
$u_n\big |_{\Omega} > C$ for any $n$. The Lemma is proved.
\end{proof}
%\begin{remark}\label{rmk: remark strumentale per la conlcusione del primo teorema}
%Assume that there exists $p_0\in M$ such that $f(p_0)=0$ and $\sup_{B_r(p_0)}u_n$ $< \infty$ uniformly in $n$ for some positive number $r$, that is, $p_0$ is not a blow-up point. Assume that the first case of Lemma \ref%{lemma: local lower bound for the sequence of solutions or convergence to minus inf} occurs; then, repeating %the argument present in the proof of the Lemma, it is easy to see that we have convergence to $-\infty$ also on the ball $B_r(p_0)$. 
%\end{remark}

\section{Proof of Theorem \ref{thm: second main result}}
In this section, we will analyze the asymptotic behaviour of the set of solutions to the
prescribed Gaussian curvature equation, when the parameter $\lambda\uparrow -\overline{f_0}=\lambda_{max}$. 
The main content of this section is the proof of Theorem \ref{thm: second main result}.

\begin{proposition}\label{prop: the energy vanishes}
Let $\beta_\lambda$ be defined by equation (\ref{eqn: definizione di beta lambda}). Then 
$\beta_\lambda \to 0$ as $\lambda \uparrow \lambda_{max}$.
\end{proposition}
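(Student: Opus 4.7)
The plan is to produce, for $\lambda$ close to $\lambda_{max}$, an explicit comparison function $v_\lambda \in \mathcal{E}_\lambda$ whose Dirichlet energy tends to zero. The preliminary section established that $\beta_\lambda = \min_{v \in \mathcal{E}_\lambda} E(v)$ for $\lambda \in (0, \lambda_{max})$, and $\beta_\lambda \geq 0$ by definition, so the bound $E(v_\lambda) \to 0$ will immediately yield the proposition.

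The guiding observation is that $u \equiv 0$ lies in $\mathcal{E}_{\lambda_{max}}$ with zero Dirichlet energy, because $\int_M f_{\lambda_{max}}\, d\mu_{g_0} = \overline{f_0} + \lambda_{max} = 0$ by the very definition of $\lambda_{max}$. It is therefore natural to seek $v_\lambda$ as a small perturbation of the zero function. Concretely, I set $\phi := f_0 - \overline{f_0} \in C^\infty(M)$ and define
\[
F(\lambda, t) := \int_M f_\lambda\, e^{2t\phi}\, d\mu_{g_0}.
\]
One computes $F(\lambda_{max}, 0) = 0$ and
\[
\partial_t F(\lambda_{max}, 0) \;=\; 2\int_M f_{\lambda_{max}}\, \phi\, d\mu_{g_0} \;=\; 2\int_M (f_0 - \overline{f_0})^2\, d\mu_{g_0} \;>\; 0,
\]
where the last inequality uses the non-constancy of $f_0$ and the fact that $\int_M \phi\, d\mu_{g_0} = 0$. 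The implicit function theorem then yields a smooth curve $\lambda \mapsto t(\lambda)$, defined in a left-neighborhood of $\lambda_{max}$, with $t(\lambda_{max}) = 0$ and $F(\lambda, t(\lambda)) = 0$. Setting $v_\lambda := t(\lambda)\phi \in \mathcal{E}_\lambda$, one then concludes
\[
0 \leq \beta_\lambda \leq E(v_\lambda) = t(\lambda)^2 \int_M |\nabla f_0|_{g_0}^2\, d\mu_{g_0} \longrightarrow 0 \qquad \text{as } \lambda \uparrow \lambda_{max}.
\]

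I do not see any substantial obstacle in this argument. Once one notices that the zero function already lies on the limiting constraint manifold $\mathcal{E}_{\lambda_{max}}$, the result reduces to a first-order perturbation of the single scalar constraint defining $\mathcal{E}_\lambda$; the only thing to verify is the transversality condition $\partial_t F(\lambda_{max}, 0) \neq 0$, and this follows directly from the non-constancy of $f_0$ together with the zero-mean choice of $\phi$. As a byproduct, the argument also delivers the quantitative estimate $\beta_\lambda = O\bigl((\lambda_{max} - \lambda)^2\bigr)$ essentially for free.
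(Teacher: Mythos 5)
Your proof is correct, and while it rests on the same underlying idea as the paper's --- namely that $u\equiv 0$ lies on the limiting constraint set because $\int_M f_{\lambda_{max}}\,d\mu_{g_0}=\overline{f_0}+\lambda_{max}=0$, and that perturbing in the direction $f_0-\overline{f_0}$ restores the constraint for nearby $\lambda$ --- the implementation is genuinely different. The paper introduces the codimension-two Banach manifold $\mathcal{C}\subset\Hone\times\R$, checks that it is smooth, and appeals to its differentiable structure (Lemma \ref{lemma: riscrittura della slice}) to produce, for each $u\in\mathcal{C}_{\lambda_{max}}$, a correction $\Theta(u,\lambda)\in\Hone$ with $\|\Theta(u,\lambda)\|_{\Hone}=o(\lambda-\lambda_{max})$ so that $u+s(u)(\lambda-\lambda_{max})(f_0-\overline{f_0})+\Theta(u,\lambda)\in\mathcal{C}_\lambda$; the proposition then follows by taking $u\equiv 0$. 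You instead use the identity $\beta_\lambda=\min_{\mathcal{E}_\lambda}E$ from Section \ref{sec:Some notation and preliminary results} and the fact that $\mathcal{E}_\lambda$ is cut out by a single scalar constraint, and you search for the comparison function only on the ray $\{t(f_0-\overline{f_0})\}$; this collapses the whole matter to the implicit function theorem for the two-variable function $F(\lambda,t)$, with the transversality $\partial_tF(\lambda_{max},0)=2\|f_0-\overline{f_0}\|_{L^2}^2>0$ supplied by the non-constancy of $f_0$. Your route buys a purely finite-dimensional argument that sidesteps the infinite-dimensional implicit function theorem underlying the paper's slice lemma (where the $o(\lambda-\lambda_{max})$ estimate for $\Theta$ is asserted rather than derived), and it yields the explicit rate $\beta_\lambda=O\bigl((\lambda_{max}-\lambda)^2\bigr)$ since $t'(\lambda_{max})=-1/\bigl(2\|f_0-\overline{f_0}\|_{L^2}^2\bigr)$, which matches the paper's coefficient $s(0)$. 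The paper's construction is more general in that it perturbs from an arbitrary $u\in\mathcal{C}_{\lambda_{max}}$, but that generality is not used for this proposition. Note finally that your $v_\lambda=t(\lambda)(f_0-\overline{f_0})$ has zero mean, so it actually lies in $\mathcal{C}_\lambda$ and even the reduction from $\mathcal{C}_\lambda$ to $\mathcal{E}_\lambda$ is dispensable.
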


In preparation for the proof of the Proposition, consider the Hilbert space $\Hone \times \R$ endowed with the natural scalar product and consider the set 
\begin{equation} \label{eqn: the hyperset C}
 \mathcal{C} := \left\{(u,\lambda) \in \Hone \times \R : \int_M u \,d\mu_{g_0} = 0 = \int_M f_\lambda e ^{2u}\,
d\mu_{g_0}\right\} .
\end{equation}
We claim that $\mathcal{C}$ is a $C^{\infty}$-Banach manifold. Indeed, we define 
$G:\Hone \times \R \rightarrow \R^2$ as:
\[
G(u,\lambda) := \left( \int_M u \,d\mu_{g_0} \, ;\int_M f_\lambda e ^{2u}\,d\mu_{g_0} \right) .
\]
Then
\[
G^{-1}((0,0)) = \mathcal{C}
\]
and $G\in C^{\infty}$ with first Frechet derivative
\[
 DG(u,\lambda)\left[v ,t\right] =  
 \left( \int_M v \,d\mu_{g_0} ; 2\int_M f_{\lambda} v e ^{2u}\,d\mu_{g_0} + t \int_M e^{2u} \, d\mu_{g_0}   \right)
\]
for any $(v,t)\in\Hone\times\R$.

For any $(u, \lambda) \in \mathcal{C}$, letting $DG(u,\lambda)$ act on $(1,0)$ and (0,1), we obtain
respectively the vectors $(1,0)$ and $ \left(0 , \int_M e^{2u}\, d\mu_{g_0} \right)$, which are clearly
a basis for $\R^2$. Moreover, the kernel of $DG(u,\lambda)$ splits $\Hone\times\R$. Thus, $\mathcal{C}$ is a smooth manifold of codimension equal to 2.

Define
\[
\mathcal{\tilde C}_\lambda := \mathcal{C} \cap \left\{(w,\mu)\in \Hone \times \R : \mu=\lambda \right\} 
\]
that is, the slice of $\mathcal{C}$ determined by the hyperplane in 
$\Hone \times \R$ of equation $\mu=\lambda$. We observe that this set is not empty for $\lambda\in(0,-\min_M f_0)$.
\begin{lemma}\label{lemma: riscrittura della slice}
There exist a function $s: \mathcal{C}_{\lambda_{max}} \to \R$ and a map 
$\Theta : \mathcal{C}_{\lambda_{max}}\times (0,-\min_M f_0) \to \Hone$ such that for any
$(u,\lambda)\in \mathcal{C}_{\lambda_{max}}\times (0,-\min_M f_0)$ we have
\[
u + s(u)(\lambda -\lambda_{max})(f_0 - \overline{f_0}) + \Theta(u, \lambda) \in \mathcal{C}_{\lambda} 
\]
and with the property that for any fixed $u\in \mathcal{C}_{\lambda_{max}}$
\[
|| \Theta(u,\lambda)||_{\Hone} = o(\lambda - \lambda_{max}) 
\]
as $\lambda\to\lambda_{max}$.
\end{lemma}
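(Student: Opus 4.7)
The strategy is a one-parameter implicit function theorem. Since $\lambda_{max}=-\overline{f_0}$, we have the key identity $f_{\lambda_{max}}=f_0-\overline{f_0}$, a smooth function of zero mean which is nontrivial because $f_0$ is non-constant. This suggests looking for elements of $\mathcal{C}_\lambda$ of the simplest possible form $v=u+b\,(f_0-\overline{f_0})$ with $b=b(u,\lambda)\in\R$ to be determined. The first constraint defining $\mathcal{C}_\lambda$ (zero mean) is then automatic, since both $u$ and $f_0-\overline{f_0}$ have zero mean, so only the Gauss--Bonnet-type constraint
\[
\int_M f_\lambda \, e^{2v}\, d\mu_{g_0} = 0
\]
needs to be enforced.

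For $u\in\mathcal{C}_{\lambda_{max}}$ fixed, define
\[
\Psi_u(\lambda,b):=\int_M f_\lambda\,e^{2u+2b(f_0-\overline{f_0})}\,d\mu_{g_0}.
\]
Since $u\in\mathcal{C}_{\lambda_{max}}$, we have $\Psi_u(\lambda_{max},0)=0$, and
\[
\partial_b\Psi_u(\lambda_{max},0)=2\int_M(f_0-\overline{f_0})^2\,e^{2u}\,d\mu_{g_0}>0,
\]
the strict positivity being an immediate consequence of the non-constancy of $f_0$. The classical implicit function theorem then provides a smooth $b(u,\cdot)$, defined on some neighborhood of $\lambda_{max}$, with $b(u,\lambda_{max})=0$ and $\Psi_u(\lambda,b(u,\lambda))\equiv 0$. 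Implicit differentiation at $\lambda=\lambda_{max}$ yields
\[
\partial_\lambda b(u,\lambda)\bigr|_{\lambda=\lambda_{max}}=-\frac{\int_M e^{2u}\,d\mu_{g_0}}{2\int_M(f_0-\overline{f_0})^2\,e^{2u}\,d\mu_{g_0}}=:s(u),
\]
which \emph{defines} the function $s$.

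Finally, set
\[
\Theta(u,\lambda):=\bigl(b(u,\lambda)-s(u)(\lambda-\lambda_{max})\bigr)(f_0-\overline{f_0})
\]
for $\lambda$ in the IFT neighborhood of $\lambda_{max}$. Then by construction
\[
u+s(u)(\lambda-\lambda_{max})(f_0-\overline{f_0})+\Theta(u,\lambda)=u+b(u,\lambda)(f_0-\overline{f_0})\in\mathcal{C}_\lambda,
\]
while the first-order Taylor expansion of $b$ at $\lambda=\lambda_{max}$ gives $|b(u,\lambda)-s(u)(\lambda-\lambda_{max})|=o(\lambda-\lambda_{max})$, hence $\|\Theta(u,\lambda)\|_{\Hone}=o(\lambda-\lambda_{max})$ as $\lambda\to\lambda_{max}$. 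For $\lambda$ outside this neighborhood the asymptotic requirement is vacuous, so one simply extends $\Theta$ by picking any $\tilde v_\lambda\in\mathcal{C}_\lambda$ (nonempty for $\lambda\in(0,-\min_M f_0)$ by Lemma \ref{lemma: C lambda e' una varieta'}) and setting $\Theta(u,\lambda):=\tilde v_\lambda-u-s(u)(\lambda-\lambda_{max})(f_0-\overline{f_0})$. The only subtle point in the whole argument is checking the non-degeneracy $\partial_b\Psi_u(\lambda_{max},0)\neq 0$, which as noted rests precisely on the non-constancy hypothesis on $f_0$; once that is in place the implicit function theorem does all the remaining work.
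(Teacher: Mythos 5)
Your proof is correct, and it takes a more elementary route than the paper. The paper works on the Banach manifold $\mathcal{C}\subset\Hone\times\R$ cut out by the submersion $G$: it computes the tangent vector $\bigl(s(u)(f_0-\overline{f_0}),1\bigr)\in T_{(u,\lambda_{max})}\mathcal{C}$ and then invokes the differentiable structure of $\mathcal{C}$ (i.e.\ the Banach-space implicit function theorem applied to $G$) to produce a curve $\lambda\mapsto u_\lambda\in\mathcal{C}_\lambda$ through $u$ with that tangent, $\Theta$ being the resulting second-order remainder, which a priori is a general element of $\Hone$. You instead restrict to the one-parameter family $u+b\,(f_0-\overline{f_0})$, observe that the zero-mean constraint is automatic there, and solve the remaining scalar constraint $\Psi_u(\lambda,b)=0$ by the classical two-variable implicit function theorem; the non-degeneracy $\partial_b\Psi_u(\lambda_{max},0)=2\int_M(f_0-\overline{f_0})^2e^{2u}\,d\mu_{g_0}>0$ is exactly the same quantity the paper needs to solve its second tangency equation, and implicit differentiation recovers the identical $s(u)$. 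What your version buys is concreteness and self-containedness: $\Theta(u,\lambda)$ is explicitly a scalar multiple of $f_0-\overline{f_0}$, the little-$o$ estimate is just the first-order Taylor expansion of the scalar function $b(u,\cdot)$, and you avoid the paper's somewhat terse appeal to ``the differentiable structure of $\mathcal{C}$''. You also handle a point the paper glosses over, namely that the IFT only produces the correction near $\lambda_{max}$, by extending $\Theta$ arbitrarily (using nonemptiness of $\mathcal{C}_\lambda$) away from $\lambda_{max}$, where the asymptotic requirement is vacuous. Since the lemma is only ever applied (in the proof of Proposition \ref{prop: the energy vanishes}) as $\lambda\uparrow\lambda_{max}$, either construction serves equally well.
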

\begin{proof}
We take $u\in\mathcal{C}_{\lambda_{max}}$, $\lambda \in (0, -\min_M f_0)$ and consider the vector $\left(s(f_0-\overline{f_0}),1\right)\in\Hone \times \R $ where $s\in\R$. We want to find a suitable $s=s(u)$ 
such that the vector $\left(s(f_0-\overline{f_0}),1\right)$ belongs to the tangent space 
$T_{(u,\lambda_{max})} \mathcal{C}$. 

That amounts to impose
\[
DG(u,\lambda_{max})\left[s(f_0-\overline{f_0}),1\right] = (0,0)
\]
that is,
\[
\left(
\begin{array}{c}
 s  \int_M (f_0-\overline{f_0}) \, d\mu_{g_0}  \\
 2s  \int_M (f_0-\overline{f_0})^2 e^{2u} \, d\mu_{g_0} + \int_M e^{2u}\, d\mu_{g_0}
 \end{array}
\right) = 
\left(
\begin{array}{c}
 0 \\
 0
\end{array}
\right)
\]
Since $ \int_M (f_0-\overline{f_0}) \, d\mu_{g_0} =0$, we get from the second equation that 
\[
s(u)= - \frac{\int_M e^{2u}\, d\mu_{g_0}}{2\int_M (f_0-\overline{f_0})^2 e^{2u} \, d\mu_{g_0}} <0 .
\]
In view of the differentiable structure of $\mathcal{C}$, there exists 
$\Theta : \mathcal{C}_{\lambda_{max}}\times (0,-\min_M f_0) \to \Hone$ such that
\[
\left( u + s(u)(\lambda -\lambda_{max})(f_0 - \overline{f_0}) + \Theta(u, \lambda) ; \lambda \right)
\in \mathcal{\tilde C}_\lambda
\]
and $|| \Theta(u,\lambda)||_{\Hone} = o(\lambda - \lambda_{max})$ as $\lambda\to\lambda_{max}$.
The result follows.
\end{proof}

\begin{proof}[Proof of Proposition \ref{prop: the energy vanishes}]
We choose $u\equiv 0\in \mathcal{C}_{\lambda_{max}}$, $\lambda \in (0, -\min_M f_0)$ and compute $s$ and
$\Theta $ accordingly. Thus, 
$v_\lambda := s(0)(\lambda -\lambda_{max})(f_0 - \overline{f_0}) + \Theta(0, \lambda) 
\in \mathcal{C}_\lambda$; we evaluate its $\Hone$ norm
\begin{equation*}
\begin{split}
||s(0)(\lambda -\lambda_{max})(f_0 - \overline{f_0}) + \Theta(0, \lambda)||_{\Hone} \leq \\ \leq
|s(0)|\,|\lambda -\lambda_{max}|\,\,||f_0 - \overline{f_0} ||_{\Hone} + o(\lambda - \lambda_{max})  
\end{split}
\end{equation*}
and see that it goes to zero as $\lambda \to \lambda_{max}$.

Since for $\lambda<\lambda_{max}$ we have by definition $\beta_\lambda \leq E(v_\lambda)$, it follows $\beta_\lambda \to 0$ as $\lambda \uparrow \lambda_{max}$. 
\end{proof}

\begin{proof}[Proof of Theorem \ref{thm: second main result} (completed)]
Let $w_\lambda\in \mathcal{C}_\lambda$ be a minimizer
for $\lambda\in\Lambda$, as the one given in Section \ref{sec:Some notation and preliminary results}: 
then, since $\overline{w}_\lambda=0$ and $||\nabla w_\lambda||_{L^2(M)}^2=\beta_\lambda
\to 0$ when $\lambda \uparrow \lambda_{max}$, it follows by Poincar\'{e} -Wirtinger's inequality that
$w_\lambda \to 0$ in $\Hone$. 

Applying Moser-Trudinger's inequality, we also have
$e^{2w_\lambda}\to 1$ in $L^p(M)$ for any $p\in \left[1,\infty \right)$. Therefore, by H\"older's inequality, we obtain that for any $v\in\Hone$ 
\[
\int_M f_\lambda v e^{2w_\lambda} d\mu_{g_0} \to \int_M (f_0-\overline{f_0}) v d\mu_{g_0} 
\]
when $\lambda\uparrow \lambda_{max}$. We recall that, for any $\lambda\in\Lambda$, $w_\lambda$ solves
\[
\int_M (\nabla w_\lambda,\nabla v)_{g_0} d\mu_{g_0} = 
 \mu(\lambda) \int_M f_\lambda v e^{2w_\lambda} d\mu_{g_0}, \;\;\; v\in\Hone  
\]
where $\mu(\lambda)>0$ is a Lagrange multiplier. Choosing $v=f_0-\overline{f_0}$, we obtain for 
$\lambda \uparrow \lambda_{max}$
\[
0 = \lim_{\lambda \uparrow \lambda_{max}} \mu(\lambda) \;\int_M (f_0-\overline{f_0})^2 d\mu_{g_0}
\]
and therefore $\lim_{\lambda \uparrow \lambda_{max}} \mu(\lambda) =0 $.

Thus, using $L^p$-estimates, we obtain
\[
||w_\lambda||_{H^2(M)} \leq c\left(||\Delta w_\lambda ||_{L^2(M)} + ||w_\lambda||_{H^1(M)}    \right) .
\]
Since
\[
||\mu(\lambda)f_\lambda e^{2w_\lambda} ||_{L^2(M)} \leq 
\mu(\lambda) ||f_\lambda||_{\infty} \left[\int_M e^{4w_\lambda}d\mu_{g_0} \right]^{1/2}
\]
and $e^{4w_\lambda}\to 1$ in $L^1$ as $\lambda \uparrow \lambda_{max}$, it follows that 
$||\Delta w_\lambda ||_{L^2(M)}\to 0$ and hence $w_\lambda$ converges to zero in $H^2(M,g_0)$.
By Sobolev's embedding results, we also have for any $\alpha\in \left[0,1 \right)$
\[
w_\lambda \to 0 \;\;\mbox{in} \;\; C^{0,\alpha}(M)
\] 
when $\lambda \uparrow \lambda_{max}$.

Thus, using the bootstrap method and Schauder's estimates, we obtain $C^{2,\alpha}$ convergence as well.

Finally, we obtain that 
\[
u_\lambda := w_\lambda + 1/2 \log \mu(\lambda), 
\]
solution to equation (\ref{eqn: the Gauss curvature}), goes uniformly to $-\infty$ on $M$ when 
$\lambda \uparrow \lambda_{max}$ and therefore it can can not admit any convergent subsequence. 

This concludes the proof of Theorem \ref{thm: second main result}.
\end{proof}
\begin{remark}
Because of the conformal invariance of the Dirichlet energy and from convergence 
$||\nabla u_\lambda||_{L^2(M)}^2\to 0$ as $\lambda \uparrow -\overline{f_0}=\lambda_{max}$, it follows that no
``fine structure'' can appear in the ``limit'' geometry of the surfaces $\left(M,e^{2u_\lambda}g_0\right)$,
independently of how we blow up the scale. 
\end{remark}

\end{document}